\numberwithin{equation}{section}
\DeclareMathOperator*{\essup}{ess\,sup}
\DeclareMathOperator{\domain}{Dom}
\DeclareMathOperator{\e}{{\mathbb{E}\textrm{xp}}}
\newtheorem{theor}{Theorem}[section]
\newtheorem{propo}[theor]{Proposition}
\newtheorem{lemma}[theor]{Lemma}
\newtheorem*{rem*}{Remark}
\newcommand{\N}{\mathbb{N}^d}
\newcommand{\R}{\mathbb{R}^d}
\begin{document}
%%%%%%%%%%%%%%%%%%%%%%%%%%%%%%%%%%%%%%%%%%%%%%
\footnotetext{
%\emph{2000 Mathematics Subject Classification:} 42C10, 42B20.\\
%42C10 - Fourier series in special orthogonal functions
%42B20 - Singular integrals (Calderón-Zygmund, etc.)
\emph{Key words and phrases:} Dunkl operators, Riesz transforms,
Calder\'on-Zygmund operators.\\
Research of both authors supported by MNiSW Grant N201 054 32/4285.
}

\title[Riesz transforms for the Dunkl harmonic oscillator]
	{Riesz transforms for the Dunkl harmonic oscillator}

%%%%%%%%%%%%%%%%%%%%%%%%%%%%%%%%%%%%%%%%%%%%%%

\author[A. Nowak]{Adam Nowak}
\author[K. Stempak]{Krzysztof Stempak}
\address{ Adam Nowak and Krzysztof Stempak,     \newline
      Instytut Matematyki i Informatyki,
      Politechnika Wroc\l{}awska,       \newline
      Wyb{.} Wyspia\'nskiego 27,
      50--370 Wroc\l{}aw, Poland        \vspace{10pt}}
\email{Adam.Nowak@pwr.wroc.pl, Krzysztof.Stempak@pwr.wroc.pl}

%%%%%%%%%%%%%%%%%%%%%%%%%%%%%%%%%%%%%%%%%%%%%%
\begin{abstract}
We propose an approach to the theory of Riesz transforms in a framework 
emerging from certain reflection symmetries in Euclidean spaces.
Relying on R\"osler's construction of multivariable generalized Hermite polynomials and 
Hermite functions associated with 
a finite reflection group on $\R$, we define and investigate a system of Riesz 
transforms related to the Dunkl
harmonic oscillator. In the case isomorphic with the group $\mathbb{Z}^d_2$ it is proved that 
the Riesz transforms are
Calder\'on-Zygmund operators in the sense of the associated space of homogeneous type,
thus their mapping properties follow from the general theory.
\end{abstract}
%%%%%%%%%%%%%%%%%%%%%%%%%%%%%%%%%%%%%%%%%%%%%%
\maketitle
%%%%%%%%%%%%%%%%%%%%%%%%%%%%%%%%%%%%%%%%%%%%%%

%%%%%%%%%%%%%%%%%%%%%%%%%%%%%%%%%%%%%%%%%%%%%%
\section{Introduction} \label{sec:intro}
%%%%%%%%%%%%%%%%%%%%%%%%%%%%%%%%%%%%%%%%%%%%%%
In \cite{NS1} the authors developed a unified approach to the theory of Riesz transforms  
in the setting of multi-dimensional orthogonal expansions with a rather general second order
differential operator as the underlying ``Laplacian''. 
It is remarkable that this approach also works within the
framework of differential-difference operators on $\R$ related to finite reflection groups,
the setting intimately connected with the Dunkl theory, which has gained a considerable
interest in various fields of mathematics as well as in theoretical physics during the last years.

Given a finite reflection group $G\subset O(\R)$ and a $G$-invariant nonnegative multiplicity
function $k\colon R\to[0,\infty)$ on a root system $R\subset \R$ associated with the 
reflections of $G$, the Dunkl differential-difference operators $T_j^{k}$, $j=1,\ldots,d$, are
 defined by
$$
T_j^{k}f(x)=\partial_j f(x) +\sum_{\beta\in R_+}k(\beta)\beta_j\frac{f(x)-
	f(\sigma_\beta x)}{\langle\beta,x\rangle}, \quad f\in C^1(\R);
$$
here $\partial_j$ is the $j$th partial derivative, $\langle\cdot,\cdot\rangle$ denotes the Euclidean
inner product in $\R$, $R_+$ is a fixed positive subsystem of $R$, and $\sigma_\beta$ denotes the
reflection in the hyperplane orthogonal to $\beta$. The Dunkl operators $T_j^{k}$, $j=1,\ldots,d$,
form a commuting system (this is an important feature of the system, cf. \cite{Du1}) 
of the first order differential-difference
operators, and reduce to $\partial_j$, $j=1,\ldots,d$, when $k\equiv0$. Moreover,
$T_j^{k}$ are homogeneous of degree $-1$ on $\mathcal{P}$, the space of all polynomials on
$\R$. This means that $T_j^{k}\mathcal{P}_{m}\subset\mathcal{P}_{m-1}$, where
$m\in\mathbb{N}=\{0,1,\ldots\}$ and $\mathcal{P}_{m}$ denotes the subspace of $\mathcal{P}$ 
consisting of polynomials of total degree $m$ (by convention, $\mathcal{P}_{-1}$ consists only
of the null function).

In Dunkl's theory the operator
$$
\Delta_{k}=\sum_{j=1}^d (T_j^{k})^2
$$
plays the role of the Euclidean Laplacian (in fact $\Delta$ comes into play when $k\equiv0$). 
It is homogeneous of degree $-2$ on $\mathcal{P}$ and symmetric in $L^2(\R, w_{k})$, where
$$
w_{k}(x)=\prod_{\beta\in R_+}|\langle\beta,x\rangle|^{2k(\beta)},
$$
if considered initially on $C^\infty_{c,G}(\R)$, 
the space of $C^\infty$ functions on $\R$ with compact support outside the union of all the
hyperplanes $H_\beta$, $\beta\in R_+$; here $H_\beta$ denotes the hyperplane passing through 
the origin and orthogonal to $\beta$. Note that $w_{k}$ is $G$-invariant.

In this article we propose a definition of Riesz transforms associated to the operator 
$$
L_{k}=-\Delta_{k}+\|x\|^2,
$$
which, due to the harmonic confinement $\|x\|^2$,
we call the {\sl Dunkl harmonic oscillator} (it becomes the classic harmonic oscillator
$-\Delta+\|x\|^2$ when $k\equiv0$). In the case of a reflection group isomorphic to the group
$\mathbb{Z}^d_2$ we study $L^p$ mapping properties of the introduced Riesz transforms in detail. 
In the general case it occurs that $L_{k}$ (or rather its self-adjoint extension
$\mathcal{L}_{k}$) has a discrete spectrum and the corresponding eigenfunctions are the 
generalized Hermite functions defined and investigated by R\"osler \cite{R2}. 
Then, under some mild assumptions,
the formal definition $R^{k}_{j} =  \delta_j ({\mathcal{L}_{k}})^{-1 \slash 2}$, rewritten in
terms of the related expansions, delivers an $L^2$-bounded operator.

The results of the present paper naturally extend those established in \cite{ST1} by J.L. Torrea 
and the second author. On the other hand, the results are closely related to the authors articles
\cite{NS2}, \cite{NS3}, where Riesz transforms for multi-dimensional Laguerre function 
expansions were defined 
and thoroughly studied. For basic facts concerning Dunkl's theory we refer the reader to the
excellent survey article by R\"osler \cite{R2}.
There, one can also find a discussion (see \cite[Section 3]{R2}) and extensive 
references concerning
applications of Dunkl's theory in mathematical physics.

The organization of the paper is the following. 
In Section \ref{sec:prel} we define Riesz transforms in the context of the harmonic
oscillator based on a general Dunkl operator.
Section \ref{sec:Z2d} introduces the particular Dunkl setting related to the
group $\mathbb{Z}_2^d$. Here, apart from gathering basic facts, we establish a new result
(Theorem \ref{tmax}) concerning the heat semigroup maximal operator.
In Section \ref{sec:main} the $\mathbb{Z}_2^d$ Riesz-Dunkl transforms and the relevant
kernels are defined, and the main results of the paper are stated (Theorem \ref{thm:main}).
As a typical application of $L^p$-boundedness of Riesz transforms, \emph{a priori}
$L^p$-bounds in the $\mathbb{Z}_2^d$ context are then derived (Proposition \ref{apriori}).
Finally, Section \ref{sec:kernel} is devoted to the proofs of all necessary 
kernel estimates related to the $\mathbb{Z}_2^d$ setting. 

Throughout the paper we use a fairly standard notation. 
Given a multi-index $n\in\N$, we write $|n|=n_1+\ldots+n_d$, $n!=n_1!\cdot\ldots\cdot n_d!$
and, for $x,y\in\R$, $x^n=x_1^{n_1}\cdot\ldots\cdot x_d^{n_d}$, $xy = (x_1 y_1,\ldots,x_d y_d)$; 
$\|x\|$ denotes the Euclidean norm of $x\in\R$, and $e_j$ the $j$th coordinate vector in $\R$.
Given $x\in \R$ and $r>0$, $B(x,r)$ is the Euclidean ball in $\R$ centered at $x$ and of radius $r$.
For a nonnegative weight function $w$ on $\R$, by $L^p(\R,w)$, $1\le p<\infty$, we denote the 
usual Lebesgue spaces related to the measure $dw(x)=w(x)dx$ 
(in the sequel we will often abuse slightly the notation and use the same symbol $w$ to 
denote the measure induced by a density $w$). 
Writing $X\lesssim Y$ indicates that $X\leq CY$ with a positive constant $C$
independent of significant quantities. We shall write $X \simeq Y$ when $X \lesssim Y$
and $Y \lesssim X.$

%%%%%%%%%%%%%%%%%%%%%%%%%%%%%%%%%%%%%%%%%%%%%%
\section{The general setting} \label{sec:prel}
%%%%%%%%%%%%%%%%%%%%%%%%%%%%%%%%%%%%%%%%%%%%%%

Similarly to numerous frameworks discussed in the literature (see, for instance, the setting 
of deformed Fock spaces discussed by Lust-Piquard \cite{LP}) it is reasonable to define, 
at least formally, the Riesz transform $\mathcal{R}^k=(R_1^{k},\ldots,R_d^{k})$ associated 
with $L_{k}=-\Delta_{k}+\|x\|^2$ as
\begin{equation}   \label{a1}
  R^{k}_{j} =  \delta_j ({\mathcal{L}_{k}})^{-1 \slash 2},
\end{equation}
where $\mathcal{L}_{k}$ is a suitable self-adjoint extension in $L^2(\R,w_{k})$ of $L_{k}$, and
$\delta_j$'s are appropriately defined first order differential-difference operators. 
In the present setting we 
define the $j$th partial derivative $\delta_j$ related to ${L}_{k}$ by
$$
\delta_j = T_j^{k} + x_j.
$$
Taking into account the fact that $w_{k}$ is $G$-invariant, a short calculation shows
that the (formal) adjoint of $\delta_j$ in $L^2(\R,w_{k})$ is
$$
\delta^{*}_j = -T_j^{k}+ x_j.
$$
To be precise, this means that 
\begin{equation} \label{a1sym}
\langle\delta_j f,g\rangle_{L^2(\R, w_{k})}=\langle f,\delta_j^*g\rangle_{L^2(\R, w_{k})}, 
\qquad f,g\in C^\infty_{c,G}(\R).
\end{equation}
One of the facts which motivate the definition \eqref{a1} is that, as a direct computation shows,
\begin{equation} \label{Ldecomp}
L_{k} = \frac12\sum^{d}_{j=1}\Big( \delta^{*}_j \delta_j+ \delta_j \delta_j^{*}\Big).
\end{equation}

In the setting of general Dunkl's theory R\"osler \cite{R1} constructed systems of 
naturally associated multivariable generalized Hermite polynomials and Hermite functions. 
The construction starts from an arbitrarily chosen orthonormal basis $\{\varphi_n\}$ of 
$\mathcal{P}$ equipped with the generalized Fischer inner product
$$
[p,q]_k=\big(p(T^k)q\big)(0), \qquad p,q\in\mathcal{P},
$$
such that $\varphi_n\in \mathcal{P}_{|n|}$ and the coefficients of $\varphi_n$'s are real 
(more precisely,
we should consider $\{\varphi_n\}$ to be a basis of the completion of $\mathcal{P}$). 
Above, $p(T^k)$ is understood as the differential-difference operator derived from 
$p(x)$ by replacing $x_j$ by $T_j^k$.
It is clear that $\mathcal{P}_{m_1} \perp_k \mathcal{P}_{m_2}$ for $m_1\neq
m_2$. For polynomials $p\in \mathcal{P}_{m_1},q\in \mathcal{P}_{m_2}$, $m_1,m_2\in\mathbb{N}$, 
the following identity holds (see \cite[(3.1)]{R1}, actually we use a slight modification of it)
\begin{equation}\label{fund}
[p,q]_k=c^{-1}_{k}2^{(m_1+m_2)/2}\int_{\R}\exp\Big(-\frac{\Delta_{k}}4\Big)p(x)
\exp\Big(-\frac{\Delta_{k}}4\Big)q(x)e^{-\|x\|^2}w_{k}(x)\,dx, 
\end{equation} 
where the constant $c_k$ is the so-called Macdonald-Mehta-Selberg integral,
$$
c_{k}=\int_{\R}\exp(-\|x\|^2)w_{k}(x)\,dx
$$
(note that for any $p\in\mathcal{P}$ the series defining $\exp(-\Delta_{k}/4)p$ terminates).

The generalized Hermite polynomials $H_n^{k}$, $n\in\N$, are then defined by
$$
H_n^{k}(x)=2^{|n|}\sqrt{n!}\exp(-\Delta_{k}/4) \varphi_n(x).
$$
Note that the definition depends essentially on the choice of the system $\{\varphi_n\}$ 
which is not specified above.
Orthogonality of $\varphi_n$'s and \eqref{fund} then show that $\{H_n^{k} : n\in\N\}$, 
is an orthogonal system in $L^2(\R, e^{-\|\cdot\|^2}w_{k})$, while the system of 
generalized Hermite functions
$$
h_n^{k}(x)=\Big(\frac{1}{2^{|n|}n!c_{k}}\Big)^{1/2}\exp(-\|x\|^2/2)H_n^{k}(x),
\qquad x\in \R,\quad n\in \N,
$$
is an orthonormal basis in $L^2(\R, w_{k})$, cf. \cite[Corollary 3.5 (ii)]{R1} 
(note a slightly different normalization: adjusting the factor $\sqrt{n!}$ in the definition
of $H_n^k$ and the canonical choice of the basis $\varphi_n(x)=(n!)^{-1/2}x^n$
results in obtaining the classical Hermite polynomials  
for $k\equiv0$,
whereas the coefficient in the definition of $h_n^k$ normalizes the system in $L^2(\R,w_k)$).
Moreover,  
$h_n^k$ are eigenfunctions of  $L_{k}$, 
$$
L_{k}h_n^{k}=(2|n|+2\gamma+d)h_n^{k},
$$
where $\gamma=\sum_{\beta\in R_+}k(\beta)$. For $k\equiv0$ and $\varphi_n(x)=(n!)^{-1/2}x^n$, $H_n^{0}(x)=\prod_{j=1}^dH_{n_j}(x_j)$,
where $H_{n_j}$, $n_j\in\mathbb{N}$, denote the classical Hermite polynomials of degree $n_j$,
cf. \cite[Example 3.3 (1)]{R1}; similarly, $h_n^{0}$ are the usual multi-dimensional 
Hermite functions,
cf., for instance, \cite{ST1}.

Let $\langle\cdot,\cdot\rangle_{k}$ be the canonical inner product in $L^2(\mathbb{R}^d, w_{k})$,
$$
\langle f, g\rangle_{k}=\int_{\mathbb{R}^d}f(x)\overline{g(x)}\,w_{k}(x)\,dx,
$$
and $\|\cdot\|_{k}$ be the norm induced by $\langle\cdot,\cdot\rangle_{k}$.
The operator
$$
  \mathcal{L}_{k} f=\sum_{n\in \N} (2|n|+2\gamma+d)\langle f,h_n^{k}
  \rangle_{{k}} \, h_n^{k}, 
$$
defined on the domain
$$
  \domain(\mathcal{L}_{k})=\Big\{f\in L^2(\R, w_{{k}}):
  \sum_{n\in \N}|(2|n|+2\gamma+d)\langle f,h_n^k
  \rangle_{k}|^2<\infty\Big\},
$$
is a self-adjoint extension of $L_{k}$ considered on $C^{\infty}_{c,G}(\R)$
as the natural domain (the inclusion $C^\infty_{c,G}(\R)\subset
\domain(\mathcal{L}_{k})$ may be easily verified). 
The spectrum of $\mathcal{L}_k$ is the discrete set
$\{2m+2\gamma+d:m\in \mathbb{N}\}$, and
the spectral decomposition of $\mathcal{L}_k$ is 
$$
\mathcal{L}_{k} f=\sum_{m=0}^\infty(2m+2\gamma+d)\mathcal{P}_m^k f, \qquad
f\in\domain(\mathcal{L}_{k}),
$$
where the spectral projections are
\begin{equation*} 
\mathcal{P}_m^{k} f=\sum_{|n|=m}\langle f,h_n^{k}
  \rangle_{{k}} \,h_n^{k}. 
\end{equation*}

Since the spectrum of $\mathcal{L}_{k}$ is separated from zero, 
$$
\mathcal{L}_{k}^{-1/2}f=\sum_{m=0}^\infty (2m+2\gamma+d)^{-1/2}\mathcal{P}_m^{k} f
$$
is a bounded operator on $L^2(\R, w_{k})$. 
It would be tempting to furnish the rigorous definition of $R^{k}_j$ 
as an $L^2$-bounded operator by writing
\begin{equation} \label{RR}
R^{k}_jf=\sum_{n\in\N} (2|n|+2\gamma+d)^{-1/2}\langle f,h_n^{k} \rangle_{k}\; \delta_jh_n^{k},
\end{equation}
for $f$ with the expansion $f=\sum_{n\in\N}\langle f,h_n^{k}\rangle_{k}h_n^{k}$.
This is indeed possible 
when two additional hypotheses are satisfied: 
\begin{itemize}
\item[(1)] the system $\{\delta_jh_n^{k} : n \in \N\}$, 
after excluding null functions,
is an orthogonal system in $L^2(\R, w_{k})$; 
\item[(2)] $\|\delta_jh_n^{k}\|_{k}\lesssim (|n|+1)^{1/2}$, $n\in\N$.
\end{itemize}
It occurs that the required properties are consequences of hypotheses that can be imposed on
$\{\varphi_n : n\in \N\}$.
For the purpose of the proposition that follows, for $j=1,\ldots,d$, we denote by
$\widehat{\mathbb{N}}^d_j$ the set of those $n\in \N$ for which $T_j^k\varphi_n$ does not 
vanish identically; also, by $\|\cdot\|_{[\,,\,]_k}$ we denote the norm in $\mathcal{P}$ induced 
by $[\cdot,\cdot]_k$.
%%%%%%%%%%%%%%%%%%%%%%%%%%%%%%%%%%%%%%%%%%%
\begin{propo} \label{eldwa}
Let $\{\varphi_n : n\in \N\}$ be an orthonormal basis in $\mathcal{P}$ such that $\varphi_n\in
\mathcal{P}_{|n|}$ and the coefficients of $\varphi_n$'s are real. 
Assume, in addition, that for every $j=1,\ldots,d$,
\begin{itemize}
\item[(i)] the system $\{T_j^k\varphi_n : n\in\widehat{\mathbb{N}}^d_j\}$ is orthogonal in
$(\mathcal{P},[\cdot,\cdot]_k)$;
\item[(ii)] $\|T_j^k\varphi_n \|_{[\,,\,]_k}\lesssim (|n|+1)^{1/2}$, $n\in\widehat{\mathbb{N}}^d_j$.
\end{itemize}
Then, for every  $j=1,\ldots,d$, $R_j^k$ defined by \eqref{RR} is bounded on $L^2(\R,w_k)$.
\end{propo}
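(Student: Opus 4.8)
The plan is to deduce, from (i) and (ii), exactly the two conditions labelled (1) and (2) above, after which the $L^{2}$-boundedness of $R_j^{k}$ defined by \eqref{RR} is routine. The link between the two pairs of conditions is the single identity
\begin{equation*}
\langle \delta_j h_n^{k}, \delta_j h_m^{k}\rangle_{k} = 2\,[T_j^{k}\varphi_n,T_j^{k}\varphi_m]_{k}, \qquad n,m\in\N,\ j=1,\ldots,d,
\end{equation*}
which I would establish first and then exploit.

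To prove this identity I would start from two elementary observations. Since $\|\cdot\|$ is $G$-invariant, so is $e^{-\|\cdot\|^{2}/2}$, and a direct computation from the definition of $T_j^{k}$ gives $T_j^{k}\big(e^{-\|x\|^{2}/2}p\big)=e^{-\|x\|^{2}/2}(T_j^{k}-x_j)p$ for every $p\in\mathcal{P}$; hence $\delta_j\big(e^{-\|x\|^{2}/2}p\big)=e^{-\|x\|^{2}/2}\,T_j^{k}p$. Second, since the $T_i^{k}$ commute, $T_j^{k}$ commutes with $\Delta_{k}$, and therefore with $\exp(-\Delta_{k}/4)$ as operators on $\mathcal{P}$ (each of these is a finite sum when applied to a fixed polynomial). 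Feeding these into the definitions of $H_n^{k}$ and $h_n^{k}$ yields
\begin{equation*}
\delta_j h_n^{k} = \Big(\frac{2^{|n|}}{c_{k}}\Big)^{1/2}\, e^{-\|x\|^{2}/2}\,\exp(-\Delta_{k}/4)\big(T_j^{k}\varphi_n\big), \qquad n\in\N .
\end{equation*}

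Now I would substitute this expression into $\langle \delta_j h_n^{k},\delta_j h_m^{k}\rangle_{k}$; because the coefficients of the $\varphi_n$ are real the integrand carries no conjugation, and the resulting integral is recognised as precisely the right-hand side of \eqref{fund} applied to the polynomials $p=T_j^{k}\varphi_n\in\mathcal{P}_{|n|-1}$ and $q=T_j^{k}\varphi_m\in\mathcal{P}_{|m|-1}$. Keeping track of the powers of $2$ and of the constant $c_{k}$ then produces the displayed identity. From it everything follows: since $\exp(-\Delta_{k}/4)$ is invertible on $\mathcal{P}$ (its inverse $\exp(\Delta_{k}/4)$ is again well defined there, the series terminating on polynomials), $\delta_j h_n^{k}$ vanishes identically if and only if $T_j^{k}\varphi_n$ does, i.e.\ exactly for $n\notin\widehat{\mathbb{N}}^{d}_{j}$; so (i) turns into condition (1), and the relation $\|\delta_j h_n^{k}\|_{k}^{2}=2\|T_j^{k}\varphi_n\|_{[\,,\,]_{k}}^{2}$ turns (ii) into condition (2).

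Finally, for $f=\sum_{n\in\N}\langle f,h_n^{k}\rangle_{k}h_n^{k}$ the terms in \eqref{RR} with $n\notin\widehat{\mathbb{N}}^{d}_{j}$ vanish, the remaining functions $\delta_j h_n^{k}$ are mutually orthogonal, and hence the partial sums of \eqref{RR} are Cauchy in $L^{2}(\R,w_{k})$ with
\begin{equation*}
\|R_j^{k}f\|_{k}^{2} = \sum_{n\in\widehat{\mathbb{N}}^{d}_{j}}\frac{\|\delta_j h_n^{k}\|_{k}^{2}}{2|n|+2\gamma+d}\,\big|\langle f,h_n^{k}\rangle_{k}\big|^{2} \lesssim \sum_{n\in\N}\frac{|n|+1}{2|n|+2\gamma+d}\,\big|\langle f,h_n^{k}\rangle_{k}\big|^{2} \lesssim \|f\|_{k}^{2},
\end{equation*}
the last step because $2\gamma+d>0$ makes the quotient $(|n|+1)/(2|n|+2\gamma+d)$ bounded in $n$. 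I expect the only delicate point to be the passage from $\langle\delta_j h_n^{k},\delta_j h_m^{k}\rangle_{k}$ to the Fischer pairing, that is, getting the normalising constants in \eqref{fund} right and making sure the ``after excluding null functions'' clause in (1) is matched exactly by the invertibility of $\exp(-\Delta_{k}/4)$; the surrounding steps are all direct computations.
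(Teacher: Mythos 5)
Your proposal is correct and follows essentially the same route as the paper: you derive the identity $\delta_j h_n^{k}=(2^{|n|}/c_{k})^{1/2}e^{-\|x\|^2/2}\exp(-\Delta_{k}/4)(T_j^{k}\varphi_n)$ from the Leibniz rule for $T_j^k$ on products with the $G$-invariant Gaussian and the commutation of $T_j^k$ with $\exp(-\Delta_k/4)$, then transfer (i) and (ii) to conditions (1) and (2) via \eqref{fund}, and conclude by Bessel's inequality. Your explicit computation of the constant, $\langle\delta_j h_n^k,\delta_j h_m^k\rangle_k=2\,[T_j^k\varphi_n,T_j^k\varphi_m]_k$, and of the final bound is consistent with the paper, which leaves these details implicit.
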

%%%%%%%%%%%%%%%%%%%%%%%%%%%%%%%%%%%%%%%%%%%%
\begin{proof} 
Assumptions which we impose on $\{T_j^k\varphi_n\}$ imply the corresponding properties of
$\{\delta_jh_n^{k}\}$ listed above. Indeed, since $T_j^k$ commutes with $\Delta_k$, thus also with
$\exp(-\Delta_k/4)$, and $T_j^k(fg)=(T_j^kf)g+(T_j^kg)f$ provided $f$ is $G$-invariant, we have
\begin{equation*}
\delta_jh_n^{k}=\big({c_{k} 2^{|n|}n!}\big)^{-1/2} e^{-\|x\|^2\slash 2}\, T_j^kH_n^k
=\big(2^{|n|}\slash {c_{k}}\big)^{1/2}e^{-\|x\|^2\slash 2} \exp(-\Delta_k/4)(T_j^k\varphi_n).
\end{equation*}
Therefore (1) and (2) follow by applying \eqref{fund} to the polynomials $T_j^k\varphi_n$. 
In consequence
the conclusion follows by Bessel's inequality applied to the orthogonal system 
$\{T_j^k \varphi_n : n\in\widehat{\mathbb{N}}^d_j\}$.
\end{proof}
A comment is probably in order, concerning a possibility of constructing an orthonormal basis 
with properties as in the first sentence of Proposition \ref{eldwa} and satisfying, in addition,  
at least Condition (i) of that proposition. As indicated in \cite[Section 3]{R1}, since
 $\mathcal{P}_{m_1} \perp_k \mathcal{P}_{m_2}$ for $m_1\neq
m_2$, $m_1, m_2\in \mathbb{N}$, the desired basis $\{\varphi_n\}$ can be constructed by 
Gram-Schmidt orthogonalization within every $\mathcal{P}_{m}$, $m\in \mathbb{N}$, from an 
arbitrarily ordered real-coefficient basis of  $\mathcal{P}_{m}$. To guarantee fulfillment 
of Condition (i) in Proposition \ref{eldwa}, again it is sufficient to act 
with every $m\ge1$ separately; this is because $T_j^{k}\mathcal{P}_{m}\subset\mathcal{P}_{m-1}$.
Summing up, 
given $m\ge1$, an orthonormal real-coefficient basis $\{\varphi_n: |n|=m\}$ of $\mathcal{P}_{m}$ 
should be chosen in such a way that for every  $j=1,\ldots,d$, the system 
$\{T_j^k\varphi_n: |n|=m\}$, after excluding null functions, is orthogonal in $\mathcal{P}_{m-1}$.
Note that this means that for every   $j=1,\ldots,d$, a number of functions 
$T_j^k\varphi_n$, $|n|=m$, must vanish.

We mention at this point that in the case of the group $\mathbb{Z}^d_2$ discussed in detail 
in the subsequent sections, 
the choice of appropriately normalized monomials $x^n$, $n\in\N$, leads to an orthonormal
basis in $\mathcal{P}$ satisfying the assumptions of Proposition \ref{eldwa}. This may be verified
directly (see a comment in Section \ref{sec:Z2d}), 
but taking into account the explicit form of the generalized Hermite functions 
that appear, it may be checked as well that the properties (1) and (2) preceding
Proposition \ref{eldwa} are also satisfied (see Section \ref{sec:main} below).
Whether or not such a choice of $\varphi_n$'s may be done for an arbitrary reflection group
remains an open question.

Another remark that seems to be worth of mentioning is that it would be tempting to
justify the boundedness on $L^2(\mathbb{R}^d,w_k)$ of $R^k_j$ given by \eqref{a1}, for an arbitrary
reflection group $G$, without requirements (1) and (2), but instead with using an argument based merely
on \eqref{a1}, \eqref{a1sym} and the self-adjointness of $\mathcal{L}_k$.
With the notation $\widetilde{R}_j^k = \delta_j^{*} \mathcal{L}_k^{-1\slash 2}$
the argument would proceed as follows:
\begin{align*}
\|R_j^k\|^2_{L^2} & \le \|R_j^k f\|^2_{L^2} + \|\widetilde{R}_j^k f\|^2_{L^2} \\
	& = \big\langle \delta_j^{*} \delta_j \mathcal{L}_k^{-1\slash 2} f, \mathcal{L}_k^{-1\slash 2}f 
		\big\rangle_{k} + \big\langle \delta_j \delta_j^{*} 
		\mathcal{L}_k^{-1\slash 2} f, \mathcal{L}_k^{-1\slash 2}f \big\rangle_{k} \\
	& \le \Big\langle \Big(\sum_{i=1}^d \big( \delta_i^{*} \delta_i + \delta_i \delta_i^{*} \big)
		\mathcal{L}_k^{-1\slash 2} f \Big), \mathcal{L}_k^{-1\slash 2} f \Big\rangle_k \\
	& = 2 \big\langle L_k \mathcal{L}_k^{-1\slash 2} f, \mathcal{L}_k^{-1\slash 2} f \big\rangle_k
	= 2 \|f\|_{L^2}^2.
\end{align*}
Note, however, that the use of \eqref{a1sym} is limited to a special class of functions
that would require $f$ from a dense subspace of $L^2(\mathbb{R}^d,w_k)$ 
(actually from a subspace of $\domain(\mathcal{L}_k^{-1\slash 2})$) to satisfy:
$\mathcal{L}_k^{-1\slash 2}f, \delta_j\mathcal{L}_k^{-1\slash 2} f \in C^{\infty}_{c,G}(\mathbb{R}^d)$.
It seems to be far from obvious to determine whether such a dense subspace always exists.

%%%%%%%%%%%%%%%%%%%%%%%%%%%%%%%%%%%%%%%%%%%%%%
\section{Preliminaries to the $\mathbb{Z}_2^d$ group case} \label{sec:Z2d}
%%%%%%%%%%%%%%%%%%%%%%%%%%%%%%%%%%%%%%%%%%%%%%

Consider the finite reflection group generated by $\sigma_j$, $j=1,\ldots,d$,
$$
\sigma_{j}(x_1,\ldots,x_j,\ldots,x_d)=(x_1,\ldots,-x_j,\ldots,x_d),
$$
and isomorphic to $\mathbb{Z}_2^d = \{0,1\}^d$.
The reflection $\sigma_j$ is in the hyperplane orthogonal to $e_j$, the $j$th coordinate 
vector in $\R$. Thus $R=\{\pm \sqrt{2}e_j: j=1,\ldots,d\}$, $R_+=\{\sqrt{2}e_j: j=1,\ldots,d\}$, 
and for a nonnegative multiplicity function $k\colon R\to [0,\infty)$ which is
$\mathbb{Z}^d_2$-invariant only values of $k$ on $R_+$ are essential. Hence we may think 
$k=(\alpha_1+1\slash 2,\ldots,\alpha_d+1\slash 2)$, 
$\alpha_j\ge-1/2$. We write $\alpha_j+1\slash 2$ in place of seemingly more appropriate $\alpha_j$
since, for the sake of clarity, it is convenient for us to stick to the notation used 
in \cite{NS3}.

In what follows the symbols $T_j^\alpha$, $\delta_j$, $\Delta_\alpha$, $w_\alpha$, $L_\alpha$,
$h_n^\alpha$, $[\cdot,\cdot]_{\alpha}$ and so on, denote the objects introduced in Section \ref{sec:prel} and related to 
the present setting. Thus
the Dunkl differential-difference operators $T_j^\alpha$, $j=1,\ldots,d$, are now given by 
$$
T_j^{\alpha}f(x)=\partial_j f(x) +(\alpha_j+1\slash 2)\frac{f(x)-f(\sigma_j x)}{x_j}, 
\qquad f\in C^1(\R),
$$
and the explicit form of the Dunkl Laplacian is
$$
\Delta_\alpha f(x)=
\sum_{j=1}^d\bigg(\frac{\partial^2f}{\partial x_j}(x)+\frac{2\alpha_j+1}{x_j}\frac{\partial
f}{\partial x_j}(x)-(\alpha_j+1\slash 2)\frac{f(x)-f(\sigma_jx)}{x_j^2}\bigg).
$$
Note that $\Delta_\alpha$, when restricted to the subspace
$$
C^1(\mathbb{R}^d)_R=\{f\in C^1(\mathbb{R}^d): \forall j=1,\ldots,d,\,\,\,f(x)=f(\sigma_j x)\},
$$
coincides with the multi-dimensional Bessel differential operator
$\sum_{j=1}^d(\partial^2_j+\frac{2\alpha_j+1}{x_j}\partial_j)$,
and 
$L_\alpha=-\Delta_\alpha+\|x\|^2$ reduces to
$$
-\Delta + \|x\|^2 - \sum_{j=1}^d \frac{2\alpha_j+1}{x_j} \frac{\partial}{\partial x_j},
$$
the operator investigated in \cite{NS3} 
(to be precise, both operators are assumed to act on functions defined on 
$\mathbb{R}^d_+ = (0,\infty)^d$). 

The corresponding weight $w_\alpha$ has the form
$$
w_\alpha(x)= \prod_{j=1}^d|x_j|^{2\alpha_j+1} \simeq
\prod_{\beta \in R_+} |\langle \beta, x \rangle_{\alpha}|^{2k(\beta)}, \qquad x\in \R;
$$
here $\langle\cdot,\cdot\rangle_\alpha$ denotes the inner product in $L^2(\mathbb{R}^d, w_\alpha)$.
In dimension one (see \cite[Example 3.3 (2)]{R1}) for the reflection group $\mathbb{Z}_2$ and 
the multiplicity parameter $\alpha+1\slash 2$, $\alpha\ge-1/2$, the polynomial basis 
$\{\varphi_n\}$ (orthonormal with respect to $[\cdot,\cdot]_{\alpha}$) is determined 
uniquely (up to sign changes) by suitable normalization of the monomials $x^n$, 
$n\in \mathbb{N}$. 
Let $a_{n,\alpha}$ be the sequence determined by the following recurrence relation: for $n\ge1$,
$a_{n,\alpha}=na_{n-1,\alpha}$ if $n$ is even and $a_{n,\alpha}=(n+2\alpha+1)a_{n-1,\alpha}$ 
if $n$ is odd, $a_{0,\alpha}=1$. It is easily checked that 
$\varphi_n^\alpha(x)=a_{n,\alpha}^{-1/2}x^n$ is an orthonormal basis. Moreover, one obtains as 
the corresponding generalized Hermite polynomials 
the {\sl genuine} generalized Hermite polynomials $H_n^{\alpha+1/2}$ on $\mathbb{R}$,  
as defined and  studied in \cite{Ch}, see also \cite{R} 
(this is the only inconsistency in our notation: writing $H_n^\alpha$ would 
collide with the already accepted notation). 
The system is orthogonal in $L^2(\mathbb{R},|x|^{2\alpha+1}e^{-x^2}\,dx)$, and
\begin{align}
\label{HeLa} H_{2n}^{\alpha+1/2}(x)&=(-1)^n2^{2n}n!L_n^{\alpha}(x^2),\\
H_{2n+1}^{\alpha+1/2}(x)&=(-1)^n2^{2n+1}n!xL_n^{\alpha+1}(x^2)\nonumber;
\end{align}
here for $\alpha>-1$ and $n\in\mathbb{N}$, 
$L^\alpha_n$ denotes the Laguerre polynomial of degree $n$ and order
$\alpha,$ see \cite[p.\,76]{Leb}. Note that when $\alpha=-1/2$, 
$H_n^0$ is indeed the classical Hermite polynomial, see \cite[p.\,81]{Leb}.
The $d$-dimensional setting emerges through a tensorization one-dimensional settings. 
Thus, given $\alpha = (\alpha _1, \ldots , \alpha _d) \in [-1/2,\infty)^{d}$, 
and taking $\varphi_n^\alpha(x)=a_{n,\alpha}^{-1/2}x^n$, $n\in\N$, $a_{n,\alpha}=\prod_{i=1}^d a_{n_i,\alpha_i}$,
the corresponding generalized Hermite functions are given by
$$
h_{n}^{\alpha}(x) = h _{n_1}^{\alpha _1}(x_1) \cdot \ldots \cdot
h_{n_d}^{\alpha _d}(x_d), \qquad x = (x_1, \ldots ,x_d)\in \R,\quad n=(n_1,\ldots,n_d) \in \N;
$$
here $h_{n_i}^{\alpha _i}$ are the one-dimensional functions
\begin{equation*}
h_{n_i}^{\alpha_i}(x_i) =c_{n_i,\alpha_i}e^{-x_i^2/2}H_{n_i}^{\alpha_i+1/2}(x_i),
\end{equation*}
with the normalization constants $c_{n_i,\alpha_i}$, where
$c_{2n_i,\alpha_i}=(2^{4n_i}\Gamma(n_i+1)\Gamma(n_i+\alpha_i+1))^{-1/2}$ and
$c_{2n_i+1,\alpha_i}=(2^{4n_i+2}\Gamma(n_i+1)\Gamma(n_i+\alpha_i+2))^{-1/2}$.
In view of \eqref{HeLa}, one has 
\begin{align*}
h_{2n_i}^{\alpha_i}(x_i)&=d_{2n_i,\alpha_i}e^{-x_i^2/2}L_{n_i}^{\alpha_i}(x_i^2),\\
h_{2n_i+1}^{\alpha _i}(x_i)&=d_{2n_i+1,\alpha_i}e^{-x_i^2/2}x_iL_{n_i}^{\alpha_i+1}(x_i^2),
\end{align*}
where
$$ 
d_{2n_i,\alpha_i}=(-1)^{n_i}\bigg(\frac{\Gamma(n_i+1)}{\Gamma(n_i+\alpha_i +1)}\bigg)^{1/2}, \qquad
d_{2n_i+1,\alpha_i}=(-1)^{n_i}\bigg(\frac{\Gamma(n_i+1)}{\Gamma(n_i+\alpha_i +2)}\bigg)^{1/2}.
$$ 

Let us comment at this point that $\{\varphi_n^{\alpha} : n \in \N\}$ satisfies assumptions
(i), (ii) of Proposition \ref{eldwa}. Indeed, 
$T_j^{\alpha} \varphi_n^{\alpha} = b_{n,\alpha} x^{n-e_j}$, where $b_{n,\alpha}$ equals
$a_{n,\alpha}^{-1\slash 2}$ multiplied either by $n_j$ or $n_j+2\alpha_j+1$, depending on
whether $n_j$ is even or odd, respectively. Hence (i) holds.
On the other hand, $\|T_j^{\alpha} \varphi_n^{\alpha}\|_{[\cdot,\cdot]_{\alpha}}
= b_{n,\alpha} a_{n-e_j,\alpha}^{1\slash 2} = \mathcal{O}(|n|^{1\slash 2})$, which implies (ii).

The system $\{h_{n}^{\alpha} : n \in \N \}$ is an orthonormal basis in $L^2(\R,w_{\alpha})$
consisting of eigenfunctions of  $L_\alpha$,
$$
L_\alpha h_n^{\alpha}=(2|n|+2|\alpha |+2d)h_n^{\alpha},
$$
where by $|\alpha|$ we denote $|\alpha|=\alpha_1+\ldots+\alpha_d$ (thus $|\alpha|$ may be negative).
For $\alpha=(-1/2,\ldots,-1/2)$ we obtain the usual Hermite functions. 
Recall that, considered on appropriate domain, the operator $L_\alpha$ is positive and symmetric 
in $L^2(\R, w_{\alpha})$.
The spectral decomposition of $\mathcal{L}_{\alpha}$ is
$$
\mathcal{L}_\alpha f=\sum_{m=0}^\infty(2m+2|\alpha|+2d)\mathcal{P}_m^\alpha f, \qquad
f\in\domain(\mathcal{L}_\alpha),
$$
with the spectral projections
\begin{equation*} 
\mathcal{P}_m^\alpha f=\sum_{|n|=m}\langle f,h_n^{\alpha}
  \rangle_{\alpha}\, h_n^{\alpha}, \qquad m \in \mathbb{N}. 
\end{equation*}

The semigroup $T_t^\alpha = \exp(-t\mathcal{L}_{\alpha})$, $t \ge 0$, generated by
$\mathcal{L}_{\alpha}$ is a strongly continuous semigroup of contractions on
$L^2(\R, w_{\alpha})$. By the spectral theorem,
\begin{equation} \label{sr_t}
T_t^\alpha f=\sum_{m=0}^\infty e^{-t(2m+2|\alpha|+2d)}\mathcal{P}^\alpha_mf, 
\qquad f\in L^2(\R, w_{\alpha}).
\end{equation}
The integral representation of $T_t^\alpha$ on $L^2(\R,w_{\alpha})$ is
\begin{equation} \label{ir_t}
  T_t^\alpha f(x)=\int_{\R} G_t^\alpha(x,y)f(y)\,dw_{\alpha}(y),
  \qquad x\in \R,
\end{equation}
where the heat kernel is given by
\begin{equation} \label{jc}
G^\alpha_t(x,y)=\sum_{m=0}^\infty e^{-t(2m+2|\alpha|+2d)} \sum_{|n|=m}
h_n^\alpha(x)h_n^\alpha(y).
\end{equation}

In dimension one, for $\alpha\ge-1/2$ it is known (see, for instance, 
\cite[Theorem 3.12]{R1} and \cite[p.\,523]{R1})
that
$$
G^\alpha_t(x,y)=\frac{1}{(2\sinh 2t)^d}\exp\Big({-\frac{1}{2} \coth(2t)\big(x^{2}+y^{2}\big)}\Big)
\Bigg[\frac{I_{\alpha}\left(\frac{x y}{\sinh 2t}\right)}
{(x y)^{\alpha}}+xy \frac{I_{\alpha+1}\left(\frac{x y}{\sinh 2t}\right)}{(x y)^{\alpha+1}}\Bigg],
$$
with $I_\nu$ being the modified Bessel function of the first kind and order $\nu$,
$$
I_{\nu}(z) = \sum_{k=0}^{\infty} \frac{(z\slash 2)^{\nu+2k}}{\Gamma(k+1)\Gamma(k+\nu+1)}, 
	\qquad |\arg z| < \pi;
$$
$I_{\nu}$, as a function on $\mathbb{R}_{+}$,
is real, positive and smooth for any $\nu > -1$, see \cite[Chapter 5]{Leb}.
Notice that the ratio $I_{\nu}(z)\slash z^{\nu}$ occurring in the formula for $G_t^{\alpha}(x,y)$
represents the entire function of $z$.

Therefore, in $d$ dimensions,
$$
G^\alpha_t(x,y)=\sum_{\varepsilon\in\{0,1\}^d}G^{\alpha,\varepsilon}_t(x,y),
$$
where
$$
G^{\alpha,\varepsilon}_t(x,y)=\frac{1}{(2\sinh 2t)^{d}}\exp\Big({-\frac{1}{2}
 \coth(2t)\big(\|x\|^{2}+\|y\|^{2}\big)}\Big) \prod^{d}_{i=1} (x_i y_i)^{\varepsilon_i}
  \frac{I_{\alpha_i+\varepsilon_i}\left(\frac{x_i y_i}{\sinh 2t}\right)}{(x_i
   y_i)^{\alpha_i+\varepsilon_i}}.
$$
By means of the standard asymptotics for $I_{\nu}$ (see \cite[(5.16.4), (5.16.5)]{Leb})
it follows that the integral in \eqref{ir_t} converges for any $f \in L^p(\R,w_{\alpha})$,
$1\le p < \infty$, and therefore it may serve as a definition of $T_t^{\alpha}f$ on these spaces.

Note that for $\varepsilon_{o} = (0,\ldots,0)$ the component kernel $G_t^{\alpha,\varepsilon_o}(x,y)$
coincides, up to the factor $2^{-d}$, with the heat kernel associated with Laguerre function
expansions studied in \cite{NS3}.
Moreover, by Soni's inequality \cite{Soni},
$$
I_{\nu+1}(z) < I_{\nu}(z), \qquad z>0, \quad \nu \ge -\frac{1}{2},
$$
it follows immediately that for $\alpha \in [-1\slash 2,\infty)^d$
$$
0 < G_t^{\alpha}(x,y) \lesssim G_t^{\alpha,\varepsilon_{o}}(x,y), \qquad t>0, \quad x,y \in \R.
$$
Then, as a consequence of \cite[Theorem 2.1]{NS3}, 
we get an interesting result on the maximal
operator $T_*^{\alpha}f = \sup_{t>0}|T_t^{\alpha}f|$, which may be of independent interest
(for the definition of the weight classes $A_p^{\alpha}$, see Section \ref{sec:main} below).

\begin{theor} \label{tmax}
Let $\alpha \in [-1\slash 2, \infty)^d$. Then $T^{\alpha}_*$ is bounded
on $L^p(\R,W dw_{\alpha})$, $W \in A^{\alpha}_p$, $1<p<\infty$, and from
$L^1(\R, W dw_{\alpha})$ to $L^{1,\infty}(\R,W dw_{\alpha})$, $W \in A^{\alpha}_1$.
\end{theor}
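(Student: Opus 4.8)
The plan is to reduce the assertion, via the pointwise kernel bound already recorded above, to the corresponding statement for the Laguerre heat semigroup maximal operator of \cite{NS3}. Since by the asymptotics for $I_\nu$ the integral representation \eqref{ir_t} makes sense for every $f\in L^p(\R,w_\alpha)$, $1\le p<\infty$, we have for such $f$ and any $t>0$
\[
|T_t^\alpha f(x)| \le \int_{\R} G_t^\alpha(x,y)|f(y)|\,dw_\alpha(y) \lesssim \int_{\R} G_t^{\alpha,\varepsilon_o}(x,y)|f(y)|\,dw_\alpha(y),
\]
where we used the pointwise bound $0<G_t^\alpha(x,y)\lesssim G_t^{\alpha,\varepsilon_o}(x,y)$ (Soni's inequality). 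Hence $T_*^\alpha f\lesssim \widetilde T_* f$ pointwise, where $\widetilde T_*$ is the positive maximal operator built from the single kernel family $G_t^{\alpha,\varepsilon_o}$, and it suffices to prove the weighted estimates for $\widetilde T_*$; on weighted $L^p$ spaces this domination extends by the usual localization argument for $A_p^\alpha$ weights.

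Next I would fold the problem onto $\mathbb{R}^d_+=(0,\infty)^d$. Because the ratio $I_\nu(z)\slash z^\nu$ is a function of $z^2$, the kernel $G_t^{\alpha,\varepsilon_o}(x,y)$ depends on $x$ and $y$ only through $(|x_1|,\ldots,|x_d|)$ and $(|y_1|,\ldots,|y_d|)$; moreover, as noted above, restricted to $\mathbb{R}^d_+\times\mathbb{R}^d_+$ it coincides, up to the factor $2^{-d}$, with the heat kernel of the Laguerre setting of \cite{NS3}. Writing $\mathcal S f(y)=\sum_{\eta\in\{-1,1\}^d}|f(\eta y)|$ for $y\in\mathbb{R}^d_+$ and splitting $\R$ into its $2^d$ octants, the $\mathbb{Z}_2^d$-invariance of $w_\alpha$ and a change of variables yield $\widetilde T_* f(x)=2^{-d}\,T_*^{\mathrm{Lag}}(\mathcal S f)(|x|)$ for $x\in\R$, where $T_*^{\mathrm{Lag}}$ denotes the heat semigroup maximal operator of \cite{NS3} on $\mathbb{R}^d_+$ and $|x|=(|x_1|,\ldots,|x_d|)$. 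In particular $\widetilde T_* f$ is $\mathbb{Z}_2^d$-invariant.

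It then remains to invoke \cite[Theorem 2.1]{NS3} and do the bookkeeping. Since the weights in $A_p^\alpha$ are $\mathbb{Z}_2^d$-invariant, for $W\in A_p^\alpha$ the restriction $W|_{\mathbb{R}^d_+}$ belongs to the Muckenhoupt class used in \cite{NS3}. Unfolding $\R$ into its octants and using the invariance of $W$, $w_\alpha$ and $\widetilde T_* f$, we get for $1<p<\infty$
\[
\int_{\R}(T_*^\alpha f)^p\,W\,dw_\alpha \lesssim \int_{\R}(\widetilde T_* f)^p\,W\,dw_\alpha \lesssim \int_{\mathbb{R}^d_+}\big(T_*^{\mathrm{Lag}}(\mathcal S f)\big)^p\,W\,dw_\alpha \lesssim \int_{\mathbb{R}^d_+}(\mathcal S f)^p\,W\,dw_\alpha \lesssim \int_{\R}|f|^p\,W\,dw_\alpha,
\]
the third step being \cite[Theorem 2.1]{NS3} and the last one using $(\mathcal S f)^p\lesssim\sum_\eta|f(\eta\,\cdot)|^p$ and reassembling the octant integrals via the invariance of $W$ and $w_\alpha$; the weak type $(1,1)$ bound for $W\in A_1^\alpha$ follows in exactly the same manner from the weak type part of \cite[Theorem 2.1]{NS3}. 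The genuinely substantive input is the pointwise kernel domination, which is already in hand; there is no serious further obstacle, the only points requiring a moment's care being the evenness of $G_t^{\alpha,\varepsilon_o}$ in each variable, which legitimizes the folding, and the fact that a $\mathbb{Z}_2^d$-invariant $A_p^\alpha$ weight restricts to an admissible weight for the Laguerre framework of \cite{NS3}.
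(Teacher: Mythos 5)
Your overall strategy is exactly the one the paper uses: dominate $G_t^{\alpha}$ pointwise by $G_t^{\alpha,\varepsilon_o}$ via Soni's inequality, exploit the evenness of $G_t^{\alpha,\varepsilon_o}$ and of $w_{\alpha}$ in each coordinate to fold the problem onto $\R_+$, and then invoke \cite[Theorem 2.1]{NS3}. However, there is one genuine error in your bookkeeping: the claim that ``the weights in $A_p^{\alpha}$ are $\mathbb{Z}_2^d$-invariant'' is false. The class $A_p^{\alpha}$ is defined by the Muckenhoupt condition over \emph{all} Euclidean balls in $\R$ with respect to $dw_{\alpha}$; nothing in that definition forces $W(\xi x)=W(x)$ (already in dimension one, $W=1+\chi_{(0,\infty)}$ is a perfectly good $A_p^{\alpha}$ weight that is not even). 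Since your final chain of inequalities leans on ``the invariance of $W$'' both to identify $W|_{\R_+}$ with an admissible Laguerre weight and to reassemble the octant integrals at the end, the argument as written does not close. The correct substitute, which is what the paper records, is the equivalence: $W\in A_p^{\alpha}$ if and only if $W_\xi\in A_p^{\alpha,+}$ for every $\xi\in\{-1,1\}^d$, where $W_\xi(x)=W(\xi x)$ and $A_p^{\alpha,+}$ is the Muckenhoupt class of the homogeneous space $(\R_+,w_{\alpha}^+,\|\cdot\|)$.

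Note also that repairing this is not entirely mechanical. After unfolding, the integral of $(\widetilde T_*f)^p\,W\,dw_{\alpha}$ over the octant $\xi\R_+$ becomes an integral over $\R_+$ of $\bigl(T_*^{\mathrm{Lag}}(\mathcal{S}f)\bigr)^p$ against $W_\xi\,dw_{\alpha}^+$, and $\mathcal{S}f$ mixes all $2^d$ reflected copies of $f$; applying \cite[Theorem 2.1]{NS3} with the weight $W_\xi$ then produces terms pairing $W_\xi$ with $|f(\eta\,\cdot)|^p$ for $\eta\neq\xi$, which is exactly where a non-invariant $W$ must be confronted rather than waved away. So the substantive inputs you identify (Soni domination, evenness, the Laguerre maximal theorem) are the right ones and match the paper, but the weight analysis is the part of the ``reduction'' that actually requires the care the paper alludes to when it cites the equivalence with the classes $A_p^{\alpha,+}$, and your proposal currently replaces that step with a false simplification.
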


The proof of Theorem \ref{tmax} reduces to an application of the abovementioned result 
from \cite{NS3}. To see that this is possible, one has to take into account that
$G_t^{\alpha,\varepsilon_{o}}(x,y)$ is even with respect to each coordinate of $(x,y)$ 
and that $w_{\alpha}(x)$ has the analogous property.
Also, it is necessary to know that, given $\alpha \in [-1\slash 2,\infty)^d$
and $1\le p < \infty$, the condition $W \in A^{\alpha}_p$ is equivalent to the condition
$W_\xi \in A_p^{\alpha,+}$ for all $\xi \in \{-1,1\}^d$, where
$W_\xi$ denotes the weight on $\R_{+}$ given by $W_\xi (x) = W(\xi x)$,
and $A_p^{\alpha,+}$ stands for the Muckenhoupt class of weights associated with the 
homogeneous space $(\R_{+},w^+_{\alpha},\|\cdot\|)$, $w^+_{\alpha}$ being the restriction
of $w_{\alpha}$ to $\R_{+}$; see (1.3), (1.4) and related comments in \cite{NS3}.
We leave further details to an interested reader.

Although in the present section we focused on the $\mathbb{Z}_2^d$ case, it should be pointed
out that there is a general background for the results considered here for an arbitrary reflection
group, see \cite{R2} for a comprehensive account.
In particular, the heat (or Mehler) kernel \eqref{jc} has always a closed form involving the
so-called Dunkl kernel, and is always strictly positive.
This implies that the corresponding semigroup is contractive on $L^{\infty}(\mathbb{R}^d,w_k)$,
and as its generator is self-adjoint and positive in $L^2(\mathbb{R}^d,w_k)$, the semigroup
is also contractive on the latter space. Hence, by duality and interpolation, it is in fact
contractive on all $L^p(\mathbb{R}^d,w_k)$, $1\le p \le \infty$.

%%%%%%%%%%%%%%%%%%%%%%%%%%%%%%%%%%%%%%%%%%%%%%
\section{$\mathbb{Z}_2^d$ Riesz transforms} \label{sec:main}
%%%%%%%%%%%%%%%%%%%%%%%%%%%%%%%%%%%%%%%%%%%%%%

First of all, we shall see how $\delta_j$'s act on $h^\alpha_n$. It is sufficient to consider 
the one-dimensional situation and then distinguish between the even and odd cases. Recall that
$\delta_j=T_j^\alpha+x_j$; in the one-dimensional case we simply write $\delta$ in place of
$\delta_1$. For $n\in\mathbb{N}$ and $\alpha\ge-1/2$, combining the fact that $h^\alpha_{2n}$ 
is an even function with the identity 
\begin{equation}\label{fm}
\frac d{dx}L^\alpha_n=-L^{\alpha+1}_{n-1}, \qquad n \ge 1,
\end{equation}
see \cite[(4.18.6)]{Leb}, one easily obtains
$$
\delta h^\alpha_{2n}=\sqrt{4n} h^\alpha_{2n-1}.
$$
Similarly, using the fact that $h^\alpha_{2n+1}$ is an odd function, \eqref{fm} and the identities
\begin{equation} \label{nn2}
-yL^{\alpha+2}_{n-1}(y)+(\alpha+1)L^{\alpha+1}_{n-1}(y)=nL^\alpha_n(y), \qquad n \ge 1,
\end{equation}
which can be deduced from \eqref{fm} and \cite[(4.18.2), (4.18.7)]{Leb}, and
\begin{equation} \label{nn3}
L^{\alpha+1}_{n-1}-L^{\alpha+1}_n=-L^\alpha_n, \qquad n \ge 1
\end{equation}
(this is \cite[(4.18.5)]{Leb} after correcting a misprint in one of 
the superscripts), one gets
$$
\delta h^\alpha_{2n+1}=\sqrt{4n+4\alpha+4}h^\alpha_{2n}.
$$

Summarizing, in $d$ dimensions, for $n\in\N$ and $\alpha\in[-1/2,\infty)^d$ we have
\begin{equation} \label{nn1}
\delta_jh^\alpha_{n}=m(n_j,\alpha_j)h^\alpha_{n-e_j},
\end{equation}
where
$$
m(n_j,\alpha_j)= \left\{ \begin{array}{ll}
\sqrt{2n_j}, & \textrm{if $n_j$ is even,} \\
\sqrt{2n_j+4\alpha_j+2}, & \textrm{if $n_j$ is odd};
\end{array} \right.
$$
here, and also later on, we use the convention that $h_{n-e_j} \equiv 0$ if $n_j=0$.
Note that for $\alpha = (-1\slash 2,\ldots, -1\slash 2)$ 
this is consistent with \cite[(3.2)]{ST1}. 
Notice also that $\{\delta_j h_n^{\alpha} : n_j \ge 1\}$ is an orthogonal system in
$L^2(\R,w_{\alpha})$ and $\|\delta_j h_n^{\alpha}\| = \mathcal{O}(|n|^{1\slash 2})$,
hence the hypotheses (1) and (2) in Section \ref{sec:prel} are satisfied.

We now provide a rigorous definition of the Riesz transforms $R^\alpha_j$,
$j=1,\ldots,d$, on $L^2(\R,w_{\alpha})$ by setting 
\begin{equation} \label{RRR}
R^{\alpha}_jf=\sum_{n\in\N} \frac{m(n_j,\alpha_j)}{\sqrt{2|n|+2|\alpha|+2d}}
\langle f,h_n^{\alpha} \rangle_{\alpha}\; h_{n-e_j}^{\alpha},
\end{equation}
for $f$ with the expansion $f=\sum_{n\in\N} \langle f,h_n^{\alpha}\rangle_{\alpha}h_n^{\alpha}$. 
Obviously, $R^{\alpha}_j$ is bounded on $L^2(\mathbb{R}^d, w_\alpha)$.

For $j=1,\ldots,d$, we define the Riesz kernels $R_j(x,y)$ as
\begin{equation} \label{riesz_ker}
R_j^{\alpha}(x,y) = \frac{1}{\sqrt{\pi}} \int_0^{\infty} 
	\delta_{j,x} G_t^{\alpha}(x,y) t^{-1\slash 2} \, dt.
\end{equation}
The following result shows that the kernel $R_j^{\alpha}(x,y)$ is associated,
in the Calder\'on-Zygmund theory sense, with the operator $R_j^{\alpha}$ defined in 
\eqref{RRR}.
%%%%%%%%%%%%%%%%%%%%%%%%%%%%%%%
\begin{propo}  \label{sto}
Assume that $\alpha=(\alpha_1,\dots,\alpha_d)$ is a multi-index
such that $\alpha_i\ge-1/2$ and $R_j^{\alpha}$ is defined by $\eqref{RRR}$.
Let $f,g\in C^\infty_{c,\mathbb{Z}_2^d}({\R})$ have disjoint supports. Then
\begin{equation}
\langle R_j^{\alpha}f,g\rangle_{\alpha}=\int_{{\R}}\int_{{\R}}
R_j^{\alpha}(x,y)f(y)\overline{g(x)}\,dw_{\alpha}(y) \, dw_{\alpha}(dx). \label{12c}
\end{equation}
\end{propo}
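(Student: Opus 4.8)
The plan is to realise $R_j^\alpha$ through the heat semigroup by means of the subordination formula $\lambda^{-1/2}=\pi^{-1/2}\int_0^\infty e^{-t\lambda}t^{-1/2}\,dt$, applied to the eigenvalues $\lambda_n:=2|n|+2|\alpha|+2d$. Starting from \eqref{RRR} and using $\delta_j h_n^\alpha=m(n_j,\alpha_j)h_{n-e_j}^\alpha$ (see \eqref{nn1}), for $f,g\in L^2(\R,w_\alpha)$ one has $\langle R_j^\alpha f,g\rangle_\alpha=\sum_n \lambda_n^{-1/2}\langle f,h_n^\alpha\rangle_\alpha\langle\delta_j h_n^\alpha,g\rangle_\alpha$; inserting the subordination integral for $\lambda_n^{-1/2}$ and interchanging $\sum_n$ with $\int_0^\infty$ gives
\[
\langle R_j^\alpha f,g\rangle_\alpha=\frac{1}{\sqrt\pi}\int_0^\infty\langle\delta_j T_t^\alpha f,g\rangle_\alpha\,t^{-1/2}\,dt .
\]
The interchange is harmless because $m(n_j,\alpha_j)/\sqrt{\lambda_n}$ is bounded, $\int_0^\infty e^{-t\lambda_n}t^{-1/2}\,dt=\sqrt\pi\,\lambda_n^{-1/2}$, and $\sum_n|\langle f,h_n^\alpha\rangle_\alpha|\,|\langle g,h_{n-e_j}^\alpha\rangle_\alpha|\le\|f\|_\alpha\|g\|_\alpha$ by the Schwarz inequality; that $\sum_n e^{-t\lambda_n}\langle f,h_n^\alpha\rangle_\alpha\,\delta_j h_n^\alpha$ converges in $L^2$ to $\delta_j T_t^\alpha f$ for each $t>0$ follows from the orthogonality and growth of $\{\delta_j h_n^\alpha\}$ recorded after \eqref{nn1} together with the closedness of $\delta_j$.

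Next I would plug in the integral representation \eqref{ir_t}. For fixed $t>0$ the kernel $G_t^\alpha(x,\cdot)$ is smooth (the ratios $I_\nu(z)/z^\nu$ are entire) and decays — along with $\partial_{x_j}G_t^\alpha$ and $G_t^\alpha(x,y)-G_t^\alpha(\sigma_j x,y)$ — fast enough in both variables to permit applying $\delta_{j,x}$ under the integral sign, so that $\delta_j T_t^\alpha f(x)=\int_\R\delta_{j,x}G_t^\alpha(x,y)f(y)\,dw_\alpha(y)$ and hence
\[
\langle\delta_j T_t^\alpha f,g\rangle_\alpha=\int_\R\int_\R\delta_{j,x}G_t^\alpha(x,y)\,f(y)\,\overline{g(x)}\,dw_\alpha(y)\,dw_\alpha(x).
\]
Substituting this into the previous display, the identity \eqref{12c} — with $R_j^\alpha(x,y)$ as in \eqref{riesz_ker} — is precisely the assertion that the $t$-integration may be moved inside the two spatial integrations, i.e.\ an application of Fubini's theorem.

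Justifying that interchange is the crux, and I expect it to be the main obstacle. By Tonelli it is enough to show that $(x,y)\mapsto\int_0^\infty|\delta_{j,x}G_t^\alpha(x,y)|\,t^{-1/2}\,dt$ is finite for almost every $(x,y)$ in $\supp g\times\supp f$ and integrable there against $dw_\alpha\otimes dw_\alpha$. This is where the disjointness of the supports enters: $\supp f$ and $\supp g$ are compact, disjoint, and contained in $\R\setminus\bigcup_i\{x_i=0\}$, so on $\supp g\times\supp f$ the quantity $\|x-y\|$ is bounded below, $w_\alpha$ is bounded above and below, and one stays away from the coordinate hyperplanes where $\delta_j$ and $w_\alpha$ are singular. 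For the time integral itself, the Gaussian factor $\exp(-\tfrac12\coth(2t)(\|x\|^2+\|y\|^2))$ in the $G_t^{\alpha,\varepsilon}$, combined with the asymptotics $I_\nu(z)/z^\nu\sim\text{const}$ as $z\to0^+$ and $I_\nu(z)\sim(2\pi z)^{-1/2}e^z$ as $z\to\infty$, yields rapid decay as $t\to0^+$ away from the set $\{x=\sigma y:\sigma\in\mathbb{Z}_2^d\}$, while for $t\to\infty$ (where $\coth 2t\to1$, $\sinh 2t\to\infty$) the whole expression decays exponentially. The one delicate point is the behaviour near the reflected diagonals $\{x=\sigma y\}$ — where, unlike in the classical Hermite case, the Dunkl heat kernel does not decay as $t\to0$ — but there one reads off from the explicit formula that $\int_0^\infty|\delta_{j,x}G_t^\alpha(x,y)|\,t^{-1/2}\,dt$ has at worst a mild (power-type, or logarithmic when $d=1$) singularity in $\mathrm{dist}(x,\sigma y)$, which is still locally integrable against the codimension-$d$ sets $\{x=\sigma y\}$. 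All of this is made quantitative by the kernel estimates of Section \ref{sec:kernel}; granting them, Tonelli yields the required finiteness, Fubini applies, and \eqref{12c} follows. (The same estimates show that the integral in \eqref{riesz_ker} defining $R_j^\alpha(x,y)$ converges absolutely for $x\notin\{\sigma y:\sigma\in\mathbb{Z}_2^d\}$, so the right-hand side of \eqref{12c} is well defined.)
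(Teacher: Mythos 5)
Your proposal is correct and follows essentially the same route as the paper: both reduce \eqref{12c} to the Parseval identity for the series \eqref{RRR}, the subordination formula for $\lambda_n^{-1/2}$, and a Fubini interchange justified by the absolute-convergence bound $\int_0^\infty|\delta_{j,x}G_t^{\alpha}(x,y)|\,t^{-1/2}\,dt\lesssim w_{\alpha}(B(x,\|y-x\|))^{-1}$, which the paper extracts from the proof of the growth estimate in Section \ref{sec:kernel} (and then cites \cite[Proposition 3.2]{ST1} for the remaining mechanics that you spell out). Your side remark about a possible singularity along the reflected diagonals $\{x=\sigma y\}$, $\sigma\neq\mathrm{id}$, is superfluous once that integrated estimate is granted, since on $\supp g\times\supp f$ the right-hand side is uniformly bounded.
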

%%%%%%%%%%%%%%%%%%%%%%%%%%%%%%%
\begin{proof} 
Let $f=\sum_{n \in \N} a_n^\alpha h_n^\alpha$,
$g=\sum_{n \in \N} b_{n}^{\alpha} h_{n}^{\alpha}$. Then
$$
R_j^\alpha f= \sum_{n\in\N} \frac{m(n_j,\alpha_j)}{\sqrt{2|n|+2|\alpha|+2d}}
a_n^{\alpha} h_{n-e_j}^{\alpha}
$$
(convergence of the above series is in $L^2$) and, by Parseval's identity,
\begin{equation}
\langle R_j^{\alpha}f,g\rangle_{\alpha} = \sum_{n\in\N}
\frac{m(n_j,\alpha_j)}{\sqrt{2|n|+2|\alpha|+2d}} a_n^{\alpha} \overline{b_{n-e_j}^{\alpha}}.
\label{13c}
\end{equation}
To finish the proof it is now sufficient to observe that the right sides of \eqref{12c} 
and \eqref{13c}
coincide. This follows by noting that the proof of \eqref{gr} in Theorem \ref{ker_est} below
contains a proof of the slightly stronger estimate (see Section \ref{sec:kernel})
$$
\int_0^\infty \big|\delta_{j,x}G_t^{\alpha}(x,y)\big|t^{-1/2}dt\lesssim 
\frac{1}{w_{\alpha}(B(x,\|y-x\|))}
$$
and thus the assumption made on the supports of $f$ and $g$ implies
$$
\int_{{\R}}\int_{{\R}}\int_0^\infty \big|\delta_{j,x} G_t^\alpha(x,y)\big|
t^{-1/2}dt\,|\overline{g(x)}f(y)|\,dw_{\alpha}(y)\,dw_{\alpha}(x)<\infty.
$$
At this point the desired conclusion easily follows by repeating arguments from the proof of
\cite[Proposition 3.2]{ST1}.
\end{proof}

The next theorem is the heart of the matter: 
it says that the kernels of Riesz transforms
associated with the Dunkl harmonic oscillator satisfy standard estimates in the sense of the
homogeneous space $(\R,w_{\alpha},\|\cdot\|)$.
The corresponding proof is rather long and tricky, hence we decided to 
locate it in a separate unit (Section \ref{sec:kernel} below).
\begin{theor} \label{ker_est}
Let $\alpha \in [-1\slash 2, \infty)^d$. Then the kernels $R_j^{\alpha}(x,y)$, $j=1,\ldots,d$,
satisfy
\begin{align}
|R_j^{\alpha}(x,y)| & \lesssim \frac{1}{w_{\alpha}(B(x,\|y-x\|))}, \qquad x \neq y, \label{gr}\\
\|\nabla_{\! x,y} R_j^{\alpha}(x,y)\| & \lesssim \frac{1}{\|x-y\|} 
	\frac{1}{w_{\alpha}(B(x,\|y-x\|))}, \qquad x \neq y. \nonumber 
\end{align}
\end{theor}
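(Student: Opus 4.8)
The plan is to work directly from the Bessel-function form of the heat kernel. Writing $G_t^{\alpha}=\sum_{\varepsilon\in\{0,1\}^d}G_t^{\alpha,\varepsilon}$ as in Section~\ref{sec:Z2d}, the first observation is that, for $y$ fixed, the component $G_t^{\alpha,\varepsilon}(\cdot,y)$ is even in the variable $x_j$ when $\varepsilon_j=0$ and odd when $\varepsilon_j=1$, since $I_{\nu}(z)/z^{\nu}$ is an even entire function of $z$ and the only source of $x_j$-parity in $G_t^{\alpha,\varepsilon}$ is the monomial factor $(x_jy_j)^{\varepsilon_j}$. Hence the difference part of $T^{\alpha}_{j,x}$ either annihilates $G_t^{\alpha,\varepsilon}$ (when $\varepsilon_j=0$) or reproduces it up to the factor $(2\alpha_j+1)/x_j$ (when $\varepsilon_j=1$), so that $\delta_{j,x}=T^{\alpha}_{j,x}+x_j$ acts on each component as a genuine first order differential operator. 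Applying the chain rule to the Gaussian factor $\exp\!\big(-\tfrac12\coth(2t)(\|x\|^2+\|y\|^2)\big)$---where the crucial cancellation $x_j-\coth(2t)x_j=-x_je^{-2t}/\sinh(2t)$ between the $x_j$ term of $\delta_j$ and the $x_j$-derivative of the Gaussian takes place---together with the identity $\frac{d}{dz}\big(z^{-\nu}I_{\nu}(z)\big)=z^{-\nu}I_{\nu+1}(z)$, one writes $\delta_{j,x}G_t^{\alpha,\varepsilon}(x,y)$ explicitly as a finite sum of terms, each being an elementary factor (one of $x_je^{-2t}/\sinh(2t)$, $y_j/\sinh(2t)$, $y_j$) times a positive expression of the same structural type as some $G_t^{\alpha,\varepsilon'}(x,y)$, at worst with one Bessel index shifted upward by one.

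With such a formula in hand one inserts it into \eqref{riesz_ker} and analyses the resulting $t$-integral. It is convenient to record the two-sided estimate $w_{\alpha}(B(x,\|y-x\|))\simeq\|x-y\|^{d}\prod_{i=1}^{d}\big(|x_i|+\|x-y\|\big)^{2\alpha_i+1}$ and to pass to a convenient parametrization of $t$ (for instance $s=\tanh t\in(0,1)$), under which $\sinh(2t)$, $\coth(2t)$ and $t^{-1/2}\,dt$ become explicit functions of $s$. The estimates are then organized according to two regimes: the region of small $t$, where the exponential factors force $\delta_{j,x}G_t^{\alpha}(x,y)$ to concentrate near the ``diagonals'' $x=\sigma y$, $\sigma\in\mathbb{Z}_2^d$, and the region of large $t$, where the Gaussian $\exp(-c(\|x\|^2+\|y\|^2))$ provides honest global decay; within each regime one splits further, dyadically, according to the sizes of the Bessel arguments $x_iy_i/\sinh(2t)$.

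On each such piece I would replace the Bessel ratios by their classical asymptotics, $I_{\nu}(z)/z^{\nu}\simeq 1$ for $0<z\lesssim 1$ and $I_{\nu}(z)/z^{\nu}\simeq z^{-\nu-1/2}e^{z}$ for $z\gtrsim 1$ (see \cite[Chapter~5]{Leb}), reducing every term to an elementary integral of Gaussian--power type in the new variable. Such integrals are disposed of by the standard device of comparing the exponent with $\|x-\sigma y\|^2$ for the relevant $\sigma$ and bounding the algebraic prefactors by powers of $|x_i|+\|x-y\|$; summing over the finitely many $\varepsilon$, $\sigma$ and dyadic pieces yields \eqref{gr}. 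Since throughout this scheme the Bessel functions and the prefactors are estimated in absolute value, the very same computation delivers the slightly stronger bound $\int_0^{\infty}|\delta_{j,x}G_t^{\alpha}(x,y)|\,t^{-1/2}\,dt\lesssim w_{\alpha}(B(x,\|y-x\|))^{-1}$ that was invoked in the proof of Proposition~\ref{sto}.

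For the gradient bound one differentiates $\delta_{j,x}G_t^{\alpha}(x,y)$ once more, in $x$ or in $y$. Each additional derivative falls either on a Gaussian exponential---producing a factor controlled by $\coth(2t)$ (equivalently $1/\sinh(2t)$ for $t$ small) times $\|x\|+\|y\|$, plus an $O(1)$ contribution---or on a Bessel ratio, raising its order by one through $\frac{d}{dz}\big(z^{-\nu}I_{\nu}(z)\big)=z^{-\nu}I_{\nu+1}(z)$; after the change of variables and the asymptotic substitutions these extra factors are absorbed, and upon integration they cost exactly one further power of $\|x-y\|^{-1}$, which is precisely the gain required by the standard estimate. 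I expect the main obstacle to lie in the bookkeeping in the transition zone, where the Bessel arguments $x_iy_i/\sinh(2t)$ are of order $1$ and neither asymptotic is cleanly valid, and in matching the anisotropy of $w_{\alpha}$ near the coordinate hyperplanes against the potentially large factors coming from the odd components $\varepsilon_j=1$ (notably the contribution of $(2\alpha_j+1)/x_j$); keeping all of this uniform in $x$, $y$ and in the finitely many component indices is what makes the argument lengthy and is the reason it is deferred to Section~\ref{sec:kernel}.
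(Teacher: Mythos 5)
Your overall strategy is genuinely different from the one the paper follows. You propose to substitute the classical two--sided asymptotics of $I_\nu$ into the Mehler--type formula for $G_t^{\alpha,\varepsilon}$ and run a dyadic case analysis over the sizes of the arguments $x_iy_i/\sinh 2t$. The paper instead linearizes the Bessel factor via Schl\"afli's Poisson--type representation $I_{\nu}(z)=z^{\nu}\int_{-1}^{1}e^{-zs}\,\Pi_{\nu}(ds)$ and the change of variable \eqref{tzeta}, after which $\delta_j G_t^{\alpha,\varepsilon}$ becomes an integral of $\exp(-\tfrac{1}{4\zeta}q_+-\tfrac{\zeta}{4}q_-)$ against the measure $\Pi_{\alpha+\varepsilon}$; all estimates then reduce to the elementary bounds of Lemma \ref{m_lem} on the quadratic forms $q_{\pm}$ and to the single transfer result, Lemma \ref{lemhom}, which converts $(x+y)^{2\gamma}\int\Pi(ds)\,(q_+)^{-d-|\alpha|-|\gamma|}$ into $1/w_{\alpha}(B(x,\|x-y\|))$. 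That device eliminates precisely the case analysis over Bessel regimes that your plan must carry out by hand. Your preliminary reductions (parity of the components, the action of $\delta_j$ as $\partial_{x_j}+x_j$ or $\partial_{x_j}+x_j+(2\alpha_j+1)/x_j$ according to $\varepsilon_j$, the cancellation $1-\coth 2t=-e^{-2t}/\sinh 2t$, and the recurrence $\frac{d}{dz}(z^{-\nu}I_{\nu}(z))=z^{-\nu}I_{\nu+1}(z)$) are all correct and consistent with the paper.

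The gap is that the decisive step is asserted rather than proved. Producing the anisotropic bound $w_{\alpha}(B(x,\|x-y\|))^{-1}\simeq\|x-y\|^{-d}\prod_i(|x_i|+\|x-y\|)^{-(2\alpha_i+1)}$ from Gaussian--power integrals in $t$ is not ``the standard device'': for each coordinate one must extract either $|x_i|^{-(2\alpha_i+1)}$ or $\|x-y\|^{-(2\alpha_i+1)}$, whichever is smaller, out of factors such as $(\sinh 2t)^{-\alpha_i-1/2}$ or $x_iy_i(\sinh 2t)^{-\alpha_i-3/2}$ coupled with the exponential, jointly with extracting $\|x-y\|^{-d}$ and uniformly over the $3^d$ regimes your decomposition creates; this is exactly the content that the paper isolates in Lemma \ref{lemhom} (resting on a nontrivial lemma of \cite{NS3}), and your sketch gives no indication of how it is done, in particular for the odd components where the extra factor $y_j(xy)^{\varepsilon-e_j}$ threatens negative powers of the coordinates. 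A second, structural omission: you never reduce to a fixed Weyl chamber. The target bound is in terms of $\|x-y\|$ alone, while the kernel is large near every reflected diagonal $x=\sigma y$; this is harmless only because the parity of $G_t^{\alpha,\varepsilon}$ lets one restrict to $x,y\in\R_{+}$, where $\|x-\sigma y\|\ge\|x-y\|$ for all $\sigma\in\mathbb{Z}_2^d$. Without that reduction your comparison of exponents with $\|x-\sigma y\|^2$ ``for the relevant $\sigma$'' does not yield \eqref{gr}. The route you describe can in principle be completed (it is close in spirit to the arguments of \cite{ST1}), but as written the proposal defers the entire difficulty of the theorem to steps labelled standard.
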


It is well known that the classical weighted Calder\'on-Zygmund theory works, 
with appropriate adjustments,
when the underlying space is of homogeneous type.
Thus we shall use properly adjusted facts from the classic
Calder\'on-Zygmund theory (presented, for instance, in \cite{Duo}) 
in the setting of the homogeneous space $(\R,w_{\alpha},\|\cdot\|)$ 
without further comments.

For $1\le p < \infty$, we denote by
$A^{\alpha}_p = A^{\alpha}_p(\R,w_{\alpha})$ the Muckenhoupt class of $A_p$
weights related to the space $(\R,w_{\alpha},\|\cdot\|)$. More precisely,
$A^{\alpha}_p$ is the class of all nonnegative functions
$W \in L^1_{\textrm{loc}}(\R,w_{\alpha})$ such that
$W^{-1\slash (p-1)} \in L^1_{\textrm{loc}}(\R,w_{\alpha})$ and
\begin{equation*} 
\sup_{B \in \mathcal{B}} \bigg[ \frac{1}{w_{\alpha}(B)} \int_B W(x) \,
     dw_{\alpha}(x) \bigg]  \bigg[ \frac{1}{w_{\alpha}(B)} \int_B W(x)^{-1\slash (p-1)}
     \, dw_{\alpha}(x) \bigg]^{p-1} < \infty,
\end{equation*}
when $1<p<\infty$, or
\begin{equation*} 
\sup_{B \in \mathcal{B}}\, \frac{1}{w_{\alpha}(B)} \int_B W(x) \,
     dw_{\alpha}(x) \; \essup_{x \in B} \frac{1}{W(x)} < \infty,
\end{equation*}
if $p=1$; here $\mathcal{B}$ is the class of all Euclidean balls in $\R$.
It is easy to check that (in dimension one) the power weight $W_r(x) = |x|^{r}$ belongs
to $A^{\alpha}_p$, $1<p<\infty$, if and only if $-(2\alpha+2)<r<(2\alpha+2)(p-1)$,
and $W_r \in A^{\alpha}_1$ if and only if $-(2\alpha+2)< r \le 0$.
%%%%%%%%%%%%%%%%%%%%%%%%
\begin{theor}  \label{thm:main}
Assume that $\alpha=(\alpha_1,\dots,\alpha_d)$ is a multi-index
such that $\alpha_i\ge-1/2$, $i=1,\ldots,d$. 
Then the Riesz operators $R^\alpha_j$,
$j=1,\ldots,d$, defined on $L^2(\R,w_{\alpha})$ by $(\ref{RRR})$, 
are Calder\'on-Zygmund operators associated with the kernels
$R^\alpha_j(x,y)$ defined by $(\ref{riesz_ker})$. In consequence, 
$R^\alpha_j$ extend uniquely to bounded linear operators on $L^p(\R,W dw_{\alpha})$,
$1<p<\infty$, $W\in A^{\alpha}_p$, and to bounded linear operators 
from $L^1(\R,W dw_{\alpha})$ to $L^{1,\infty}(\R,W dw_{\alpha})$, $W\in A_1$. Denoting
these extensions by the same symbols $R^\alpha_j$, for $f\in L^p(\R,Wdw_{\alpha})$, $1<p<\infty$, 
$W\in A_p$, we have
\begin{equation}  \label{star}
\langle R^\alpha_jf,h^{\alpha}_{n-e_j}\rangle_{\alpha} =
\frac{m(n_j,\alpha_j)}{\sqrt{2|n|+2|\alpha|+2d}}
\langle f,h_n^{\alpha}\rangle_{\alpha}.
\end{equation}
\end{theor}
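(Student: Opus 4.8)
The plan is to check that each $R^{\alpha}_j$ fits the definition of a Calder\'on--Zygmund operator on the space of homogeneous type $(\R,w_{\alpha},\|\cdot\|)$ and then to invoke the weighted Calder\'on--Zygmund theory. First I would record that $(\R,w_{\alpha},\|\cdot\|)$ is indeed of homogeneous type: each one-dimensional density $|x_j|^{2\alpha_j+1}$ with $2\alpha_j+1\ge 0$ induces a doubling measure on $\mathbb{R}$, and $w_{\alpha}$ is the corresponding product measure, hence doubling with respect to Euclidean balls. Next come the three defining requirements. First, $R^{\alpha}_j$ is bounded on $L^2(\R,w_{\alpha})$ --- immediate from \eqref{RRR}, since $m(n_j,\alpha_j)/\sqrt{2|n|+2|\alpha|+2d}$ is bounded uniformly in $n$. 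Second, the kernel $R_j^{\alpha}(x,y)$ of \eqref{riesz_ker} satisfies the standard growth and smoothness estimates --- this is exactly Theorem \ref{ker_est}. Third, the kernel represents the operator off the diagonal.

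For the third point I would upgrade Proposition \ref{sto}. That result gives, for $f,g\in C^{\infty}_{c,\mathbb{Z}_2^d}(\R)$ with disjoint supports, the identity $\langle R_j^{\alpha}f,g\rangle_{\alpha}=\iint R_j^{\alpha}(x,y)f(y)\overline{g(x)}\,dw_{\alpha}(y)\,dw_{\alpha}(x)$, together with the absolute integrability justifying it. Fixing $f\in C^{\infty}_{c,\mathbb{Z}_2^d}(\R)$ and letting $g$ range over $C^{\infty}_{c,\mathbb{Z}_2^d}$ functions supported in the open set $\R\setminus(\supp f\cup\bigcup_{\beta}H_{\beta})$ --- a family dense in $L^2$ of that set, since the reflection hyperplanes are $w_{\alpha}$-null --- one obtains $R_j^{\alpha}f(x)=\int R_j^{\alpha}(x,y)f(y)\,dw_{\alpha}(y)$ for a.e.\ $x\notin\supp f$. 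A routine approximation, using density of $C^{\infty}_{c,\mathbb{Z}_2^d}$ in $L^2(\R,w_{\alpha})$ and the size bound \eqref{gr}, extends this to every compactly supported $f\in L^2(\R,w_{\alpha})$ and $x\notin\supp f$. Thus each $R^{\alpha}_j$ is a Calder\'on--Zygmund operator in the required sense, and the weighted Calder\'on--Zygmund theory on spaces of homogeneous type (cf.\ \cite{Duo}, with the usual adjustments) yields at once: the unweighted $L^p$ and weak type $(1,1)$ bounds from $L^2$-boundedness and the kernel estimates, and then, feeding in the $A_p$ condition, boundedness on $L^p(\R,W\,dw_{\alpha})$ for $1<p<\infty$, $W\in A^{\alpha}_p$, and from $L^1(\R,W\,dw_{\alpha})$ to $L^{1,\infty}(\R,W\,dw_{\alpha})$ for $W\in A^{\alpha}_1$. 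Uniqueness of the extensions is the usual density argument. Denote them again by $R^{\alpha}_j$.

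Finally, for \eqref{star}, fix $1<p<\infty$, $W\in A^{\alpha}_p$, and $n\in\N$ with $n_j\ge 1$. The functionals $\langle\cdot,h_n^{\alpha}\rangle_{\alpha}$ and $\langle\cdot,h_{n-e_j}^{\alpha}\rangle_{\alpha}$ are continuous on $L^p(\R,W\,dw_{\alpha})$: by H\"older this reduces to $h_m^{\alpha}W^{-1/p}\in L^{p'}(\R,w_{\alpha})$, i.e.\ $\int |h_m^{\alpha}|^{p'}W^{-1/(p-1)}\,dw_{\alpha}<\infty$, which follows from local integrability of $W^{-1/(p-1)}$ together with the Gaussian decay of $h_m^{\alpha}$, the latter beating the at most polynomial growth of $W^{-1/(p-1)}\,dw_{\alpha}$. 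For $f\in L^2(\R,w_{\alpha})\cap L^p(\R,W\,dw_{\alpha})$ the extension coincides with the series definition \eqref{RRR}, so \eqref{star} holds by Parseval. For general $f\in L^p(\R,W\,dw_{\alpha})$ I would approximate by $f_m\in C^{\infty}_{c,\mathbb{Z}_2^d}(\R)$ in $L^p(W\,dw_{\alpha})$, apply continuity of $R^{\alpha}_j$ and of the two functionals, and pass to the limit.

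The genuinely hard inputs --- the pointwise kernel bounds of Theorem \ref{ker_est} and the off-diagonal representation in Proposition \ref{sto} --- have been quarantined elsewhere, so the present argument is essentially bookkeeping. The only step requiring real care is the passage in the third point above, from the bilinear identity valid merely for $\mathbb{Z}_2^d$-admissible test functions with disjoint supports to the pointwise representation $R^{\alpha}_jf(x)=\int R_j^{\alpha}(x,y)f(y)\,dw_{\alpha}(y)$ for arbitrary compactly supported $f\in L^2$ and $x\notin\supp f$; this rests on the $w_{\alpha}$-nullity of the reflection hyperplanes and the density of $C^{\infty}_{c,\mathbb{Z}_2^d}$.
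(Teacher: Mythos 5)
Your proposal is correct and follows essentially the same route as the paper: the Calder\'on--Zygmund property is obtained by combining Theorem \ref{ker_est} with Proposition \ref{sto}, the weighted $L^p$ and weak type $(1,1)$ bounds then come from the general theory on the homogeneous space $(\R,w_{\alpha},\|\cdot\|)$, and \eqref{star} is proved by approximating $f$ from $L^2\cap L^p(W\,dw_{\alpha})$ and using continuity of the coefficient functionals. The additional verifications you supply (doubling of $w_{\alpha}$, upgrading the bilinear identity of Proposition \ref{sto} to a pointwise off-diagonal representation, and the H\"older argument for $\langle\cdot,h_n^{\alpha}\rangle_{\alpha}$ on $L^p(W\,dw_{\alpha})$) are exactly the routine details the paper leaves implicit.
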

%%%%%%%%%%%%%%%%%%%%%%%%
\begin{proof}
The first statement is just a combination of Theorem \ref{ker_est} and Proposition \ref{sto}.
The second is a consequence of the general theory, see \cite{Duo}. 
To verify (\ref{star}) fix $j=1,\ldots,d$,
and choose $f_k\in L^p(\R,Wdw_{\alpha})\cap L^2(\R,w_{\alpha})$ such that 
$f_k\to f$ in $L^p(\R,Wdw_{\alpha})$. Then, for any $n\in\N,$
$\langle f_k,h_n^{\alpha}\rangle_{\alpha} \to \langle f,h_n^{\alpha}\rangle_{\alpha}$ and
$\langle R_j^\alpha f_k,h_{n-e_j}^{\alpha}\rangle_{\alpha}\to \langle R_j^\alpha f,
h_{n-e_j}^{\alpha}\rangle_{\alpha}$, where $R_j^\alpha f$ is, by the very definition, the limit of
$R_j^\alpha f_k$ in $L^p(\R,Wdw_{\alpha})$. The claim follows.
\end{proof}

We finish this section with an application of Theorem \ref{thm:main} connected with 
\emph{a priori} $L^p$-bounds. This kind of estimates is a useful tool in the theory of partial 
differential equations.
\begin{propo} \label{apriori}
Let $\alpha \in [-1\slash 2,\infty)^d$, $1<p<\infty$, and $i,j \in \{1,\ldots,d\}$. Then
$$
\| \delta_i^* \delta_j f \|_{L^p(\R,w_{\alpha})} \lesssim 
	\| L_{\alpha} f\|_{L^p(\R,w_{\alpha})}, \qquad f \in C^{\infty}_c(\R).
$$
\end{propo}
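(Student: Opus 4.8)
The plan is to realize the second-order operator $\delta_i^*\delta_j$ as the composition of $L_\alpha$ with two first-order operators which are, up to the spectral factor $\mathcal{L}_\alpha^{-1/2}$, the Riesz transform $R_j^\alpha$ and the adjoint Riesz transform $(R_i^\alpha)^*$; the desired estimate then reduces to the $L^p(\R,w_\alpha)$-boundedness of those operators (with the trivial weight) already established. Throughout write $\lambda_n=2|n|+2|\alpha|+2d$ for the eigenvalue of $\mathcal{L}_\alpha$ on $h_n^\alpha$.

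I first record how the operators involved act on the orthonormal basis $\{h_n^\alpha\}$. From \eqref{RRR} one reads off $R_j^\alpha h_n^\alpha=\lambda_n^{-1/2}\,m(n_j,\alpha_j)\,h_{n-e_j}^\alpha$, and since this matrix is real its adjoint satisfies $(R_i^\alpha)^* h_l^\alpha=\lambda_{l+e_i}^{-1/2}\,m(l_i+1,\alpha_i)\,h_{l+e_i}^\alpha$. On the other hand \eqref{nn1} is the lowering relation $\delta_j h_n^\alpha=m(n_j,\alpha_j)h_{n-e_j}^\alpha$, and a computation dual to the one leading to \eqref{nn1} — distinguishing the even and odd cases and using the Laguerre identities \eqref{fm}, \eqref{nn2}, \eqref{nn3} — gives the raising relation $\delta_i^* h_l^\alpha=m(l_i+1,\alpha_i)\,h_{l+e_i}^\alpha$. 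Let $S_{ij}$ be the bounded linear operator on $L^2(\R,w_\alpha)$ acting on the basis by
\[
S_{ij}h_n^\alpha=\frac{m(n_j,\alpha_j)\,m\big((n-e_j)_i+1,\alpha_i\big)}{\lambda_n}\,h_{n-e_j+e_i}^\alpha;
\]
it is bounded because $m(n_j,\alpha_j)^2\lesssim|n|+1\simeq\lambda_n$ and the shift $n\mapsto n-e_j+e_i$ is injective on $\{n:n_j\ge1\}$. Since this shift preserves $|n|$ we have $\lambda_{(n-e_j)+e_i}=\lambda_n$, so the two eigenvalue square roots produced by $R_j^\alpha$ and $(R_i^\alpha)^*$ collapse into a single $\lambda_n$, and a direct comparison of coefficients shows that $(R_i^\alpha)^* R_j^\alpha$ acts on every $h_n^\alpha$ exactly as $S_{ij}$. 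Both operators being bounded,
\[
S_{ij}=(R_i^\alpha)^* R_j^\alpha\qquad\text{on }L^2(\R,w_\alpha).
\]

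Now fix $f\in C_c^\infty(\R)$. Since $\alpha_i\ge-1/2$, the potentially singular terms $\frac{2\alpha_i+1}{x_i}\partial_i f$ and $(\alpha_i+1/2)\frac{f(x)-f(\sigma_i x)}{x_i^2}$ in $\Delta_\alpha f$ cancel their singularities along $\{x_i=0\}$ (write $f(x)-f(\sigma_i x)=x_i\int_{-1}^1\partial_i f(x',tx_i)\,dt$), so $\Delta_\alpha f$, and hence $L_\alpha f$, is a bounded compactly supported function; in particular $L_\alpha f\in L^p(\R,w_\alpha)$ for every $1\le p<\infty$. The same cancellation shows that $\delta_j f\in C_c^\infty(\R)$ and that the formal adjoint relation \eqref{a1sym} extends, by a routine integration by parts with vanishing boundary contributions along the coordinate hyperplanes, to all $f\in C_c^\infty(\R)$ and $g\in\mathcal{S}(\R)$. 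Applying this twice, together with \eqref{nn1} and the raising relation, gives for every $l\in\N$
\[
\langle\delta_i^*\delta_j f,h_l^\alpha\rangle_\alpha=\langle f,\delta_j^*\delta_i h_l^\alpha\rangle_\alpha=m(l_i,\alpha_i)\,m\big((l-e_i)_j+1,\alpha_j\big)\,\langle f,h_{l-e_i+e_j}^\alpha\rangle_\alpha,
\]
while the same extended symmetry applied to $L_\alpha$ yields $\langle L_\alpha f,h_n^\alpha\rangle_\alpha=\lambda_n\langle f,h_n^\alpha\rangle_\alpha$, so that $f\in\domain(\mathcal{L}_\alpha)$ and $\mathcal{L}_\alpha f=L_\alpha f$. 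Comparing the two displays with the definition of $S_{ij}$ gives $\delta_i^*\delta_j f=S_{ij}(L_\alpha f)=(R_i^\alpha)^* R_j^\alpha(L_\alpha f)$. Finally, by Theorem \ref{thm:main} with the trivial weight $W\equiv1\in A_p^{\alpha}$, $R_j^\alpha$ is bounded on $L^p(\R,w_\alpha)$ for $1<p<\infty$, hence so is $(R_i^\alpha)^*$ by duality, and therefore
\[
\|\delta_i^*\delta_j f\|_{L^p(\R,w_\alpha)}\le\|(R_i^\alpha)^*\|_{p\to p}\,\|R_j^\alpha\|_{p\to p}\,\|L_\alpha f\|_{L^p(\R,w_\alpha)}\lesssim\|L_\alpha f\|_{L^p(\R,w_\alpha)},
\]
which is the assertion.

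The heart of the matter is the factorization $\delta_i^*\delta_j\mathcal{L}_\alpha^{-1}=(R_i^\alpha)^* R_j^\alpha$: the point is that $n\mapsto n-e_j+e_i$ preserves $|n|$, so the two eigenvalue square roots combine to $\lambda_n^{-1}$ and no residual spectral multiplier of $\mathcal{L}_\alpha$ is left over (which would otherwise require a separate multiplier theorem). The only technical nuisance is the passage from $C_c^\infty(\R)$ to the spectral picture — namely $f\in\domain(\mathcal{L}_\alpha)$ with $\mathcal{L}_\alpha f=L_\alpha f$, together with the extension of \eqref{a1sym} to $C_c^\infty(\R)\times\mathcal{S}(\R)$ — which rests on the cancellation of the apparent hyperplane singularities when $\alpha_i\ge-1/2$ and is handled just as the inclusion $C^\infty_{c,G}(\R)\subset\domain(\mathcal{L}_\alpha)$ noted in Section \ref{sec:prel}.
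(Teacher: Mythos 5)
Your proof is correct and follows essentially the same route as the paper: both factor $\delta_i^*\delta_j$ as $\widehat{R}_i^{\alpha}R_j^{\alpha}L_{\alpha}$ (your $S_{ij}=(R_i^{\alpha})^*R_j^{\alpha}$ is exactly the paper's $\widehat{R}_i^{\alpha}R_j^{\alpha}$), verify the identity on the basis $\{h_n^{\alpha}\}$ via the raising relation $\delta_i^*h_n^{\alpha}=m(n_i+1,\alpha_i)h_{n+e_i}^{\alpha}$, and conclude from Theorem \ref{thm:main} with $W\equiv 1$ plus duality. Your treatment of the passage from basis vectors to general $f\in C_c^{\infty}(\R)$ (testing against $h_l^{\alpha}$ and checking that the hyperplane singularities of $\Delta_{\alpha}f$ cancel for $\alpha\in[-1/2,\infty)^d$) is in fact more explicit than the paper's appeal to approximation by linear combinations of $h_n^{\alpha}$.
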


\begin{proof}
Let $\widehat{R}_j^{\alpha}$ be the adjoint of $R_j^{\alpha}$ taken in $L^2(\R,w_{\alpha})$.
Then
$$
\widehat{R}_j^{\alpha}f = \sum_{n \in \N} \frac{m(n_j+1,\alpha_j)}{\sqrt{2|n|+2|\alpha|+2d+2}}
	\langle f, h_n^{\alpha}\rangle_{\alpha} \, h^{\alpha}_{n + e_j}, \qquad f \in L^2(\R,w_{\alpha}).
$$
By Theorem \ref{thm:main} and duality, $\widehat{R}_j^{\alpha}$ extends to a bounded operator
in $L^p(\R,w_{\alpha})$, $1<p<\infty$. 
Therefore, to finish the proof, in suffices to show that
$$
\delta_i^* \delta_j f = \widehat{R}_i^{\alpha} R_j^{\alpha} L_{\alpha} f
$$
for $f \in C_c^{\infty}(\R)$. However, since such functions can be suitably approximated 
by linear combinations of $h_n^{\alpha}$, it is enough to verify the relation for $f = h_n^{\alpha}$.

Using the identity
$$
\delta_i^{*} h_n^{\alpha} = m(n_i+1,\alpha_i) h_{n+e_i}^{\alpha}, \qquad n \in \N,
$$
which, taking into account that $\delta_i^* = -\delta_i+2x_i$, can be obtained with the aid of 
\eqref{nn1}, \eqref{nn2} and \eqref{nn3}, we get
$$
\delta_i^* \delta_j h_n^{\alpha} = m(n_j,\alpha_j)\, \delta_i^* h^{\alpha}_{n-e_j}
	= m(n_j,\alpha_j) \,m\big((n-e_j)_i+1,\alpha_i\big)\, h^{\alpha}_{n-e_j+e_i}.
$$
On the other hand,
\begin{align*}
\widehat{R}_i^{\alpha} R_j^{\alpha} L_{\alpha} h_n^{\alpha} & = (2|n|+2|\alpha|+2d)\,
	\widehat{R}_i^{\alpha} \bigg( \frac{m(n_j,\alpha_j)}{\sqrt{2|n|+2|\alpha|+2|d|}} h_{n-e_j}\bigg) \\
	& = (2|n|+2|\alpha|+2d) \frac{m(n_j,\alpha_j)}{\sqrt{2|n|+2|\alpha|+2|d|}} 
		\frac{m((n-e_j)_i+1,\alpha_i)}{\sqrt{2|n-e_j|+2|\alpha|+2d+2}}\, h^{\alpha}_{n-e_j+e_i} \\
	& = m(n_j,\alpha_j)\, m\big((n-e_j)_i+1,\alpha_i\big) \, h^{\alpha}_{n-e_j+e_i}.
\end{align*}
The conclusion follows.
\end{proof}

%%%%%%%%%%%%%%%%%%%%%%%%%%%%%%%%%%%%%%%%%%%%%%%%%%%%%%%%%%%%%%%%%%%%%%%%%%%%%%%%%%%%%%%%%%%%
\section{Kernel estimates for the $\mathbb{Z}_2^d$ Riesz operators} \label{sec:kernel}
%%%%%%%%%%%%%%%%%%%%%%%%%%%%%%%%%%%%%%%%%%%%%%%%%%%%%%%%%%%%%%%%%%%%%%%%%%%%%%%%%%%%%%%%%%%%

This section is devoted to the proof of the standard estimates stated in Theorem \ref{ker_est}.
We begin with a suitable decomposition of the kernels under consideration:
$$
R_j^{\alpha}(x,y) = 
  \sum_{\varepsilon \in \{0,1\}^d} R_j^{\alpha,\varepsilon}(x,y), \qquad x,y \in \R,
$$
where the component kernels are given by
\begin{equation} \label{comp_ker}
R_j^{\alpha,\varepsilon}(x,y) = \frac{1}{\sqrt{\pi}} \int_0^{\infty} \delta_{j} 
	G_t^{\alpha,\varepsilon}(x,y) t^{-1\slash 2} \, dt;
\end{equation}
here and also later on $\delta_j$ always refers to the $x$ variable.
Clearly, it is sufficient to prove the estimates in question for each 
$R_j^{\alpha,\varepsilon}(x,y)$, $\varepsilon \in \{0,1\}^d$, separately.
Thus, from now on, we fix $\varepsilon \in \{0,1\}^d$.

Next, we claim that it is enough to show the relevant bounds 
only for $x,y \in \R_+$
(here, in fact, we could replace $\R_{+}$ by any other Weyl chamber of the considered group
of reflections).
Observe that $G^{\alpha,\varepsilon}_t(x,y)$, as a function
of either $x$ or $y$, is even with respect to the $j$th axis if $\varepsilon_j=0$ and is odd
with respect to the $j$th axis if $\varepsilon_j=1$.
The derivative $\delta_j$, when taken with respect to $x$ and restricted to even functions
in the $j$th coordinate, has the form
\begin{equation} \label{der_e}
\delta_j^e = \frac{\partial}{\partial x_j} + x_j,
\end{equation}
whereas the restriction of $\delta_j$ to odd functions in the $j$th coordinate is
\begin{equation} \label{der_o}
\delta_j^o = \frac{\partial}{\partial x_j} + x_j + \frac{2\alpha_j+1}{x_j}.
\end{equation}
Thus $\delta_j G_t^{\alpha,\varepsilon}(x,y)$, as a function of $x$, is either even or odd
with respect to the $j$th axis, depending on whether $\varepsilon_j=1$ or $\varepsilon_j=0$,
respectively.
Moreover, as a function of $y$, it is either even or odd with respect to the $j$th axis, depending
on whether $\varepsilon_j = 0$ or $\varepsilon_j=1$, correspondingly. It follows that
$$
|R^{\alpha,\varepsilon}_j(x,y)| = |R_j^{\alpha,\varepsilon}(\eta x, \xi y)|, \qquad
	\eta,\xi \in \{-1,1\}^d.
$$
Similarly, we have
$$
\|\nabla_{\!x,y} R^{\alpha,\varepsilon}_j(x,y)\| = 
	\|\nabla_{\!x,y} R_j^{\alpha,\varepsilon}(\eta x, \xi y)\|, \qquad
	\eta,\xi \in \{-1,1\}^d.
$$
This, together with the symmetry of $w_{\alpha}$, 
$$
w_{\alpha}(x) = w_{\alpha}(\xi x), \qquad \xi \in \{-1,1\}^d,
$$ 
proves the claim.

Finally, we also observe that, after restricting $x$ and $y$ to $\R_+$, 
with no loss of generality we may replace
the Euclidean balls $B(x,\|y-x\|)$ in the relevant estimates by their intersections with $\R_+$,
denoted further by $B^+(x,\|y-x\|)$.

Summing up, we are reduced to proving the following.

\begin{lemma} \label{ker_est_+}
Assume that $\alpha \in [-1\slash 2,\infty)^d$, 
$\varepsilon \in \{0,1\}^d$ and $j \in \{1,\ldots,d\}$.
Then
\begin{align*}
|R_j^{\alpha,\varepsilon}(x,y)| & \lesssim \frac{1}{w_{\alpha}^+(B^+(x,\|y-x\|))}, \qquad 
	x,y \in \R_+, \quad x \neq y, \\
\|\nabla_{x,y} R_j^{\alpha,\varepsilon}(x,y)\| & \lesssim \frac{1}{\|x-y\|} 
	\frac{1}{w^+_{\alpha}(B^+(x,\|y-x\|))}, \qquad x,y \in \R_+, \quad x \neq y, 
\end{align*}
with $w_{\alpha}^+$ being the restriction of $w_{\alpha}$ to $\R_+$.
\end{lemma}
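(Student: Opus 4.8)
The plan is to reduce everything to explicit one-dimensional computations with the component heat kernel $G_t^{\alpha,\varepsilon}$ and to the sharp estimates for the Laguerre heat kernel already available from \cite{NS3}. First I would record the action of $\delta_j$ on $G_t^{\alpha,\varepsilon}$ in the $x_j$-variable: by \eqref{der_e}–\eqref{der_o} this is a first-order operator that, applied to the one-dimensional factor $(2\sinh 2t)^{-1}\exp(-\tfrac12\coth(2t)(x_j^2+y_j^2))(x_jy_j)^{\varepsilon_j}I_{\alpha_j+\varepsilon_j}(x_jy_j/\sinh 2t)/(x_jy_j)^{\alpha_j+\varepsilon_j}$, produces again a Gaussian-times-Bessel expression, now involving $I_{\alpha_j+\varepsilon_j}$ and $I_{\alpha_j+\varepsilon_j\pm1}$ with a rational prefactor. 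Concretely, $\partial_{x_j}$ brings down $-\coth(2t)x_j$ from the exponential plus a term from differentiating the Bessel ratio, for which one uses $\tfrac{d}{dz}(z^{-\nu}I_\nu(z))=z^{-\nu}I_{\nu+1}(z)$; the $+x_j$ term partially cancels the $-\coth(2t)x_j$ leaving a factor $\simeq x_j(1-\coth(2t))=-x_j\,e^{-2t}/\sinh 2t$ for small $t$; when $\varepsilon_j=1$ the extra $(2\alpha_j+1)/x_j$ combines with the Bessel recurrence to keep everything smooth. The upshot is a clean pointwise bound of the form
\begin{equation*}
|\delta_{j}G_t^{\alpha,\varepsilon}(x,y)| \lesssim \Big(\frac{x_j+y_j}{\sinh 2t}+x_j\Big)
G_t^{\alpha,\varepsilon'}(x,y)
\end{equation*}
for a suitable $\varepsilon'$ (and using Soni's inequality to dominate the Bessel ratios by the $\varepsilon_o$ one), which is the key technical identity.

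Next I would insert this into \eqref{comp_ker} and split the $t$-integral at $t\simeq 1$. On $t\gtrsim1$ the kernel $G_t^{\alpha,\varepsilon}$ and all its derivatives decay like $e^{-ct}$ uniformly, so that part of the integral is $\lesssim 1$, and since on the diagonal-free region $\|x-y\|$ is controlled, $1\lesssim 1/w_\alpha^+(B^+(x,\|x-y\|))$ whenever $\|x-y\|$ is bounded; for $\|x-y\|$ large one exploits the extra Gaussian decay $\exp(-c(\|x\|^2+\|y\|^2))$. The main work is the local part $t\lesssim1$, where $\sinh 2t\simeq t$, $\coth 2t\simeq 1/(2t)$, and one is essentially looking at the Laguerre/Bessel heat kernel on $(0,\infty)^d$. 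Here I would invoke the estimates from \cite{NS3} — this is exactly the point of the remark that $G_t^{\alpha,\varepsilon_o}$ coincides (up to $2^{-d}$) with the Laguerre heat kernel — together with the standard bound $\int_0^1 \exp(-c\|x-y\|^2/t)\,t^{-a}\,dt/t \lesssim \|x-y\|^{-2a}$ applied coordinatewise, to convert the time integral into the required $1/w_\alpha^+(B^+(x,\|x-y\|))$; the measure estimate $w_\alpha^+(B^+(x,r))\simeq \prod_j (|x_j|+r)^{2\alpha_j+1}\, r^d$ is elementary and I would state it as a preliminary.

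For the gradient bound I would differentiate $\delta_{j}G_t^{\alpha,\varepsilon}(x,y)$ once more in $x$ or $y$. Each such differentiation costs an extra factor that is, schematically, $O((\|x\|+\|y\|)\coth 2t + 1/x_i + \|x-y\|/(t))$-type, and after the same splitting and the same one-dimensional time integrals it produces precisely the extra $1/\|x-y\|$. No genuinely new idea is needed beyond bookkeeping, but the algebra is heavier because of the many Bessel recurrences and the coordinates $i$ equal to or different from $j$ must be treated separately (the $i=j$ case interacts with the $\delta_j$ already present). The main obstacle, and where I expect to spend most of the effort, is controlling the region where $\|x-y\|$ is small but $x$ or $y$ is close to one of the walls $\{x_i=0\}$: there the weight $w_\alpha^+$ degenerates, the factors $1/x_i$ in $\delta^o_j$ and in the derivatives are singular, and one must check that the Bessel-function cancellations genuinely absorb these singularities and that the small-$t$ behaviour of $I_\nu(x_iy_i/t)/(x_iy_i)^\nu$ (which is $\simeq t^{-\nu}e^{x_iy_i/t}$ for large argument but $\simeq$ const for small argument) is handled uniformly across the transition $x_iy_i\simeq t$. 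This is the same delicate analysis carried out in \cite{NS3}, and the strategy is to quote those estimates verbatim for the $\varepsilon_o$ component and to reduce the other components to it via Soni's inequality and the explicit recurrences, leaving only a finite amount of new one-dimensional casework.
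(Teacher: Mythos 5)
Your central displayed bound
\[
|\delta_{j}G_t^{\alpha,\varepsilon}(x,y)| \lesssim \Big(\frac{x_j+y_j}{\sinh 2t}+x_j\Big)
G_t^{\alpha,\varepsilon'}(x,y)
\]
is (essentially) true but fatally lossy, and the rest of the plan collapses with it. When $\delta_j$ hits the heat kernel it produces two large terms of \emph{opposite sign} for small $t$: the Gaussian factor contributes $-\coth(2t)x_j+x_j=x_j(1-\coth 2t)\simeq -x_j\slash(2t)$ as $t\to 0$ (so the cancellation you invoke between $+x_j$ and $-\coth(2t)x_j$ does not happen where it matters), while the derivative of the Bessel ratio contributes $\simeq +y_j\slash(2t)$ in the regime $x_jy_j\slash\sinh 2t\gg 1$. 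Their sum is $\simeq (y_j-x_j)\slash(2t)$, and it is precisely this difference structure that saves the estimate near the diagonal, via $\frac{|x_j-y_j|}{t}e^{-c\|x-y\|^2\slash t}\lesssim t^{-1\slash 2}$. Replacing it by $(x_j+y_j)\slash\sinh 2t$ discards the cancellation. Concretely, for $d=1$, $\alpha=-1\slash 2$, $x\simeq y\simeq 1$, your bound feeds into the time integral as
\[
\int_0^1\frac{x+y}{t}\,t^{-1\slash 2}e^{-c(x-y)^2\slash t}\,t^{-1\slash 2}\,dt\simeq |x-y|^{-2},
\]
whereas the target is $1\slash w_{\alpha}^+(B^+(x,|x-y|))\simeq |x-y|^{-1}$; the same loss of one power of $\|x-y\|$ occurs in every dimension and is compounded in the gradient estimate. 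So the "key technical identity" cannot serve as the starting point, and no splitting of the $t$-integral repairs it.

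The paper avoids this trap by never estimating $\delta_j G_t$ pointwise in Bessel form. It substitutes Schl\"afli's Poisson-type representation $I_{\nu}(z)=z^{\nu}\int_{-1}^{1}e^{-zs}\,\Pi_{\nu}(ds)$ and the change of variable $t=\frac12\log\frac{1+\zeta}{1-\zeta}$, after which $\delta_j$ acts on an explicit exponential $\exp(-q_+\slash(4\zeta)-\zeta q_-\slash 4)$ and produces the signed factor $\frac{1}{2\zeta}(x_j+y_js_j)$ with $s_j\in[-1,1]$. Since $q_+\ge (x_j+y_js_j)^2$, this factor is absorbed at the cost of only $\zeta^{-1\slash 2}$ (Lemma \ref{m_lem}\,(a)) --- that is, the cancellation you lost is made manifest for every $s$ simultaneously. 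One then integrates in $\zeta$ (Lemma \ref{m_lem}\,(d)) and converts the resulting bound $\int\Pi_{\alpha+\gamma+\kappa}(ds)\,q_+^{-d-|\alpha|-|\gamma|}$ into $1\slash w^+_{\alpha}(B^+(x,\|x-y\|))$ via Lemma \ref{lemhom}; your outline has no substitute for this last step beyond the ball-volume formula, and the wall/transition casework you identify as the "main obstacle" is exactly what the $\Pi_{\nu}$-representation eliminates. If you insist on a direct Bessel-asymptotics route, you must carry the signed combination coming from the exponential and the $I_{\nu+1}\slash I_{\nu}$ ratio exactly through all regimes, which in effect reproduces the Schl\"afli computation by hand.
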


The proof of Lemma \ref{ker_est_+} relies on a generalization of the technique applied recently
by the authors in the Laguerre function setting \cite{NS3}.
Thus the main tool in performing necessary kernel estimates will be Schl\"afli's integral 
representation of Poisson's type for the modified Bessel function, 
\begin{equation*} 
I_{\nu}(z) = z^{\nu} \int_{-1}^1 \exp({-z s})\, \Pi_{\nu}(ds), \qquad
    z>0, \quad \nu \ge -\frac{1}{2},
\end{equation*}
where the measure $\Pi_{\nu}$ is determined by the density
$$
\Pi_{\nu}(ds) = \frac{(1-s^2)^{\nu-1\slash 2}ds}{\sqrt{\pi} 2^{\nu}\Gamma{(\nu+1\slash 2)}},
    \qquad s \in (-1,1), 
$$
when $\nu > -1\slash 2$, and in the limiting case of $\nu = -1\slash 2$, 
$$
\Pi_{-1\slash 2} = \frac{1}{\sqrt{2\pi}} \big( \eta_{-1} + \eta_1 \big),
$$
where $\eta_{-1}$ and $\eta_1$ denote point masses at $-1$ and $1$, respectively.

Consequently, for $\alpha \in[-1\slash 2,\infty)^d$, the kernel $G_t^{\alpha,\varepsilon}(x,y)$
may be expressed as
\begin{align*}
& G^{\alpha,\varepsilon}_t(x,y) = \\
& \frac{(xy)^{\varepsilon}}{2^d(\sinh (2t))^{d+|\alpha|+|\varepsilon|}}
	 \int_{[-1,1]^d}
    \exp\bigg({-\frac{1}{2} \coth (2t) \big(\|x\|^2 + \|y\|^2\big)}
    {-\sum_{i=1}^d \frac{x_i y_i s_i}{\sinh (2t)}}\bigg) \, \Pi_{\alpha+\varepsilon}(ds),
\end{align*}
where $\Pi_{\alpha}$ denotes the product measure $\bigotimes_{i=1}^d \Pi_{\alpha_i}$. 
Then the change of variable $\!^{\dag}$
\footnotetext{$^{\dag}$ This useful change of variable was invented by Stefano Meda,
	in the context of the Ornstein-Uhlenbeck operator.}
\begin{equation} \label{tzeta}
t = t(\zeta) = \frac{1}{2} \log \frac{1+ \zeta}{1-\zeta}, \qquad \zeta \in (0,1),
\end{equation}
leads to the following useful symmetric formula: 
\begin{equation*} 
G^{\alpha,\varepsilon}_t(x,y) =\frac{1}{2^d} 
	\Big( \frac{1-\zeta^2}{2\zeta}\Big)^{d + |\alpha| + |\varepsilon|} 
		(xy)^{\varepsilon} \int_{[-1,1]^d}
    \exp\Big(-\frac{1}{4\zeta} q_{+}(x,y,s) - \frac{\zeta}{4} q_{-}(x,y,s)\Big) 
    	\, \Pi_{\alpha+\varepsilon}(ds),
\end{equation*}
with
$$
q_{\pm}(x,y,s) = \|x\|^2 + \|y\|^2 \pm 2 \sum_{i=1}^d x_i y_i s_i.
$$
For the sake of brevity we shall often write shortly $q_{+}$ or $q_{-}$ omitting the arguments.

We pass to the Riesz kernels. By the change of variable \eqref{tzeta} the component kernels 
\eqref{comp_ker} can be expressed as
\begin{equation} \label{k_r}
R_j^{\alpha,\varepsilon}(x,y) = \int_{[-1,1]^d} \Pi_{\alpha + \varepsilon}(ds) \int_0^1
	\beta_{d,\alpha+\varepsilon}(\zeta) \, \delta_j \psi_{\zeta}^{\varepsilon}(x,y,s) \, d\zeta,
\end{equation}
where
$$
\beta_{d,\alpha}(\zeta) = \frac{\sqrt{2}}{2^d\sqrt{\pi}} \bigg( \frac{1-\zeta^2}{2\zeta}
	 \bigg)^{d+|\alpha|} \frac{1}{1-\zeta^2} \bigg( \log \frac{1+\zeta}{1-\zeta} \bigg)^{-1\slash 2}
$$
(this is, up to the constant factor $2^{-d}$, 
$\beta_{d,\alpha}^1(\zeta)$ defined in \cite[(5.4)]{NS3}), and
$$
\psi_{\zeta}^{\varepsilon}(x,y,s) = (xy)^{\varepsilon}
	\exp\Big(-\frac{1}{4\zeta} q_{+}(x,y,s) - \frac{\zeta}{4} q_{-}(x,y,s)\Big);
$$
passing with $\delta_j$ under the integral sign and then changing the order of integration
that was implicitly performed above will be justified in a moment.

To compute $\delta_j \psi_{\zeta}^{\varepsilon}(x,y,s)$,
the expression appearing in \eqref{k_r},
observe that the derivative $\delta_j$ may be replaced either by $\delta_j^e$ or
$\delta_j^o$, depending on whether $\varepsilon_j =0$ or $\varepsilon_j=1$, respectively;
see \eqref{der_e}, \eqref{der_o} and the accompanying comments.
Then one easily arrives at
\begin{align} \label{d_psi}
\delta_j \psi_{\zeta}^{\varepsilon}(x,y,s)  = &
 \bigg[ (xy)^{\varepsilon} \Big( x_j - \frac{1}{2\zeta}(x_j+y_j s_j)
	- \frac{\zeta}{2} (x_j-y_j s_j) \Big)\\
	&  + \chi_{\{\varepsilon_j=1\}} (2\alpha_j+2) 
		y_j (xy)^{\varepsilon-e_j} \bigg] \exp\Big( - \frac{1}{4\zeta} q_+ - \frac{\zeta}{4} q_{-} \Big).
		\nonumber
\end{align}

We now come back to explaining the possibility of exchanging $\delta_{j}$
with the integral sign. 
Observe that integrating the expression
$$
\int_{[-1,1]^d} \Pi_{\alpha+\varepsilon}(ds) \; \partial_{x_j} \bigg[ (xy)^{\varepsilon}
	\exp\Big(-\frac{1}{4\zeta} q_{+}(x,y,s) -\frac{\zeta}{4} q_{-}(x,y,s)\Big)\bigg]
$$
with respect to $x_j$ in an interval $[0,v]$, $0<v<\infty$, leads to an absolutely
convergent double integral. Thus we may apply Fubini's theorem and then differentiate
the resulting equality in $v$. This gives the desired conclusion.

The application of Fubini's theorem that was necessary to get \eqref{k_r} is also justified since,
in fact, the proof (to be given below) of the first estimate in Lemma \ref{ker_est_+} contains
the proof of 
$$
\int_{[-1,1]^d} \Pi_{\alpha + \varepsilon}(ds) \int_0^1
	\beta_{d,\alpha+\varepsilon}(\zeta) \, |\delta_j \psi_{\zeta}^{\varepsilon}(x,y,s)| \, d\zeta
		< \infty.
$$

For proving Lemma \ref{ker_est_+} we will need several simple technical results obtained 
in \cite[Section 5]{NS3}. We gather them in the lemma below for the sake of reader's convenience.

\begin{lemma}[{\cite[Corollary 5.2, Lemma 5.5]{NS3}}] \label{m_lem}
Assume that $\alpha \in [-1\slash 2,\infty)^d$.
Let $b \ge 0$ and $c>0$ be fixed. Then, for any $j=1,\ldots,d$, we have
\begin{align*}
& \emph{(a)} \qquad
	\big( |x_j + y_j s_j| + |y_j + x_j s_j| \big)^b \exp\Big( - c\frac{1}{\zeta} q_{+}(x,y,s) \Big)
     \lesssim \zeta^{b\slash 2},\\
& \emph{(b)} \qquad
	\big( |x_j - y_j s_j| + |y_j - x_j s_j| \big)^b \exp\Big( - {c\zeta } q_{-}(x,y,s) \Big)
     \lesssim \zeta^{-b\slash 2},\\
& \emph{(c)} \qquad
(x_j)^b \exp\Big( -c \frac{1}{\zeta}q_{+}(x,y,s) -c \zeta q_{-}(x,y,s) \Big)
    \lesssim \zeta^{-b\slash 2}, \\
& \emph{(d)} \qquad
\int_0^1 \beta_{d,\alpha}(\zeta) \zeta^{-b-1\slash 2}
    \exp\Big( -c \frac{1}{\zeta} q_{+}(x,y,s) \Big) \, d\zeta
        \lesssim \big(q_{+}(x,y,s)\big)^{-d-|\alpha|-b},
\end{align*}
uniformly in $x,y \in \R_+$, $s \in [-1,1]^d$ and, except of $(d)$, in $\zeta \in (0,1)$. 
\end{lemma}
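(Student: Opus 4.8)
The plan is to reduce each of the four estimates to elementary one-variable considerations, exploiting that $q_{+}$ and $q_{-}$ split as sums over coordinates, $q_{\pm}(x,y,s) = \sum_{i=1}^d q_{\pm}^{(i)}$ with $q_{\pm}^{(i)} = x_i^2 + y_i^2 \pm 2 x_i y_i s_i$, and that each $q_{\pm}^{(i)} \ge 0$ for $s_i \in [-1,1]$ (so that $q_{\pm}^{(j)} \le q_{\pm}$). The only analytic input beyond algebra will be the trivial calculus fact that $\sup_{u \ge 0} u^a e^{-cu} < \infty$ for every $a \ge 0$, $c > 0$; everything in (a)--(c) is then a matter of bounding the polynomial prefactors by a suitable power of $q_{+}$ or $q_{-}$ and rescaling the exponential. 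Throughout I use $d + |\alpha| \ge d/2 \ge 1/2 > 0$, which follows from $\alpha_i \ge -1/2$, so that $d+|\alpha|+b>0$.

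For (a), I would first record the algebraic identities $(x_j + y_j s_j)^2 = q_{+}^{(j)} - y_j^2(1 - s_j^2)$ and $(y_j + x_j s_j)^2 = q_{+}^{(j)} - x_j^2(1 - s_j^2)$, valid since $x_j, y_j \ge 0$; as $s_j^2 \le 1$ these give $|x_j + y_j s_j|, |y_j + x_j s_j| \le (q_{+}^{(j)})^{1/2} \le q_{+}^{1/2}$. Hence the left-hand side of (a) is at most $2^b q_{+}^{b/2} \exp(-c q_{+}/\zeta)$, and writing $u = q_{+}/\zeta$ turns this into $2^b \zeta^{b/2} u^{b/2} e^{-cu} \lesssim \zeta^{b/2}$. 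Part (b) is identical in form: the analogous identities $(x_j - y_j s_j)^2 = q_{-}^{(j)} - y_j^2(1-s_j^2)$ and $(y_j - x_j s_j)^2 = q_{-}^{(j)} - x_j^2(1-s_j^2)$ give $|x_j - y_j s_j|, |y_j - x_j s_j| \le q_{-}^{1/2}$, so the left-hand side is at most $2^b q_{-}^{b/2} \exp(-c\zeta q_{-})$, and the substitution $v = \zeta q_{-}$ produces $2^b \zeta^{-b/2} v^{b/2} e^{-cv} \lesssim \zeta^{-b/2}$.

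For (c), I would note $x_j^2 \le x_j^2 + y_j^2 = \tfrac12(q_{+}^{(j)} + q_{-}^{(j)}) \le \max(q_{+}, q_{-})$, whence $x_j^b \le q_{+}^{b/2} + q_{-}^{b/2}$. Splitting accordingly and discarding whichever exponential factor is not needed (each is $\le 1$), the term carrying $q_{+}^{b/2}$ is controlled by $q_{+}^{b/2}\exp(-cq_{+}/\zeta) \lesssim \zeta^{b/2} \le \zeta^{-b/2}$ exactly as in (a), while the term carrying $q_{-}^{b/2}$ is controlled by $q_{-}^{b/2}\exp(-c\zeta q_{-}) \lesssim \zeta^{-b/2}$ exactly as in (b); here I use $\zeta \in (0,1)$ so that $\zeta^{b/2} \le \zeta^{-b/2}$.

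The substantive part, and the main obstacle, is (d), where one must extract the precise power $q_{+}^{-d-|\alpha|-b}$ from the $\zeta$-integral. I would split $\int_0^1 = \int_0^{1/2} + \int_{1/2}^1$. On $(0,1/2]$ the elementary asymptotics of $\beta_{d,\alpha}$ give $\beta_{d,\alpha}(\zeta) \lesssim \zeta^{-(d+|\alpha|)-1/2}$ (since $\tfrac{1-\zeta^2}{2\zeta} \asymp \zeta^{-1}$, $(1-\zeta^2)^{-1} \asymp 1$ and $\log\tfrac{1+\zeta}{1-\zeta} \asymp \zeta$), so the integrand is $\lesssim \zeta^{-(d+|\alpha|+b)-1}\exp(-cq_{+}/\zeta)$; the change of variable $r = q_{+}/\zeta$ then turns $\int_0^{1/2}$ into $q_{+}^{-(d+|\alpha|+b)}\int_{2q_{+}}^{\infty} r^{(d+|\alpha|+b)-1}e^{-cr}\,dr$, which is at most $q_{+}^{-(d+|\alpha|+b)}\,\Gamma(d+|\alpha|+b)\,c^{-(d+|\alpha|+b)}$. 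On $[1/2,1)$ the factor $\zeta^{-b-1/2}$ is bounded and $\beta_{d,\alpha}$ is integrable up to $\zeta=1$ (its singularity there is of the integrable type $(1-\zeta)^{d+|\alpha|-1}(\log\tfrac{1}{1-\zeta})^{-1/2}$, integrable because $d+|\alpha|>0$), so that piece is $\lesssim \exp(-cq_{+})$ using $\exp(-cq_{+}/\zeta) \le \exp(-cq_{+})$ for $\zeta \le 1$; finally $\exp(-cq_{+}) \lesssim q_{+}^{-(d+|\alpha|+b)}$ since $q_{+}^{d+|\alpha|+b}e^{-cq_{+}}$ is bounded on $(0,\infty)$. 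The delicate points I expect to watch are the behaviour of $\beta_{d,\alpha}$ as $\zeta \to 1$, where the exponent $d+|\alpha|-1$ may be negative, so that one must verify both integrability and that the bound $q_{+}^{-d-|\alpha|-b}$ absorbs this contribution for small and for large $q_{+}$ alike, and the degenerate case $q_{+}=0$, where the asserted bound holds trivially.
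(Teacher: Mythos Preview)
The paper does not prove this lemma; it is imported from \cite[Corollary 5.2, Lemma 5.5]{NS3} and used as a black box, so there is no in-paper argument to compare against. Your self-contained proof is correct: the algebraic reductions $|x_j\pm y_js_j|^2,\ |y_j\pm x_js_j|^2\le q_{\pm}^{(j)}\le q_{\pm}$ together with $\sup_{u\ge0}u^a e^{-cu}<\infty$ dispatch (a)--(c), and for (d) your split at $\zeta=1/2$, the asymptotics $\beta_{d,\alpha}(\zeta)\lesssim\zeta^{-d-|\alpha|-1/2}$ on $(0,1/2]$ followed by the substitution $r=q_+/\zeta$, and the integrability of $\beta_{d,\alpha}$ near $\zeta=1$ (using $d+|\alpha|\ge d/2>0$) all check out.
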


We will also need the following generalization of \cite[Proposition 5.9]{NS3}.

\begin{lemma} \label{lemhom}
Assume that $\alpha \in [-1\slash 2, \infty)^d$ and let $\gamma,\kappa \in [0,\infty)^d$ be fixed. 
If a complex-valued kernel $K(x,y)$ 
defined on $\R_{+} \times \R_{+} \backslash \{(x,y) : x = y\}$
satisfies
\begin{align*}
|K(x,y)| & \lesssim (x+y)^{2\gamma} \int_{[-1,1]^d}\Pi_{\alpha+\gamma+\kappa}(ds) 
	\; \big(q_{+}(x,y,s)\big)^{-d - |\alpha| -|\gamma|}, 
\end{align*}
then also 
\begin{align*} 
|K(x,y)| & \lesssim \frac{1}{w^+_{\alpha}(B^+(x,\|y-x\|))}, \qquad x \neq y. 
\end{align*}
Similarly, the estimate
\begin{align*}
\|\nabla_{\! x,y}K(x,y)\| & \lesssim (x+y)^{2\gamma}\int_{[-1,1]^d} \Pi_{\alpha+\gamma+\kappa}(ds)\;
    \big(q_{+}(x,y,s)\big)^{-d - |\alpha| -|\gamma| - 1\slash 2} 
\end{align*}
implies
\begin{align*} 
\|\nabla_{\! x,y} K(x,y)\| & \lesssim \frac{1}{\|x-y\|} \;
 \frac{1}{w^+_{\alpha}(B^+(x,\|y-x\|))}, \qquad x \neq y.
\end{align*}
\end{lemma}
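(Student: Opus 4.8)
The plan is to reduce the "invariant" estimates on $K(x,y)$ to the "explicit" estimates in the hypotheses by controlling the quantity $w^+_\alpha(B^+(x,\|y-x\|))$ from below in terms of the quantities $q_+(x,y,s)$ and $(x+y)^{2\gamma}$ that appear on the right-hand side. First I would record the standard formula for the $w_\alpha$-measure of a ball in the homogeneous space $(\mathbb{R}^d_+,w^+_\alpha,\|\cdot\|)$: since $w^+_\alpha$ is a product of the one-dimensional weights $x_i^{2\alpha_i+1}$, one has
\begin{equation*}
w^+_\alpha(B^+(x,r)) \simeq \prod_{i=1}^d r \,(x_i+r)^{2\alpha_i+1}, \qquad x \in \mathbb{R}^d_+,\ r>0,
\end{equation*}
which is the multidimensional analogue of the one-dimensional estimate used in \cite{NS3}. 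Taking $r = \|y-x\|$ this gives a clean lower bound for the denominator appearing in the conclusion.

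Next I would compare $q_+(x,y,s)$ with $\|x-y\|^2$ and with the coordinates of $x+y$. The key elementary observations are: $q_+(x,y,s) \ge \|x-y\|^2$ for all $s \in [-1,1]^d$ (since $q_+ = \|x-y\|^2 + 2\sum_i x_i y_i(1+s_i) \ge \|x-y\|^2$, using $x,y \in \mathbb{R}^d_+$ and $s_i \ge -1$), and also $q_+(x,y,s) \gtrsim \sum_i x_i y_i$ is false in general, but coordinatewise one does have, for the variables over which the measure $\Pi_{\alpha+\gamma+\kappa}$ is genuinely supported away from $\pm 1$, estimates of the form $q_+ \gtrsim (1+s_i)x_i y_i$ together with $q_+ \gtrsim (x_i-y_i)^2$. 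Combining these, for each coordinate $i$ one gets $q_+(x,y,s) \gtrsim (x_i+y_i)\,|y-x|$ whenever $x_i$ or $y_i$ is comparable to $x_i+y_i$; handling the borderline situation requires splitting according to whether a coordinate is "small" or "large" relative to $\|y-x\|$, exactly as in \cite[Proposition 5.9]{NS3}, which I would follow closely. The role of the shift $\gamma$ in $\Pi_{\alpha+\gamma+\kappa}$ versus $\Pi_{\alpha+\gamma}$ (the extra $\kappa$) is harmless: $\Pi_{\alpha+\gamma+\kappa}$ is an absolutely continuous perturbation with a density bounded by a constant times that of $\Pi_{\alpha+\gamma}$ on compact subsets of $(-1,1)^d$, and the endpoint contributions (when some $\alpha_i+\gamma_i+\kappa_i = -1/2$, forcing $\kappa_i=\gamma_i=0$, $\alpha_i=-1/2$) are point masses that are treated separately but identically.

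With these ingredients the argument runs as follows. Insert the lower bound for $q_+$ into the hypothesis:
\begin{equation*}
\big(q_+(x,y,s)\big)^{-d-|\alpha|-|\gamma|} \lesssim \prod_{i=1}^d \big( (x_i+y_i)|y-x| \big)^{-(\alpha_i+\gamma_i)} \big(q_+(x,y,s)\big)^{-\sum_i (1\slash 2)} \cdot (\text{correction}),
\end{equation*}
distribute the factor $(x+y)^{2\gamma}$ to cancel the $(x_i+y_i)^{-2\gamma_i}$ that results from over-counting, integrate the remaining $s$-dependence against $\Pi_{\alpha+\gamma+\kappa}$ (a finite measure, so the integral is bounded by a constant after using $q_+ \ge \|x-y\|^2$ to kill the residual negative powers), and match the outcome with the reciprocal of $\prod_i \|y-x\|(x_i+y_i)^{2\alpha_i+1}$. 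The gradient estimate is entirely parallel: the extra half-power $\big(q_+\big)^{-1\slash 2} \lesssim \|x-y\|^{-1}$ produces precisely the additional factor $\|x-y\|^{-1}$ in the conclusion. The main obstacle, and the only genuinely delicate point, is the coordinatewise case analysis needed to pass from $q_+(x,y,s) \gtrsim \|x-y\|^2$ and the coordinate bounds to the product lower bound $q_+(x,y,s) \gtrsim \prod$-type comparison with $w^+_\alpha(B^+(x,\|y-x\|))$; this is where one must carefully exploit that $s$ ranges over $[-1,1]^d$ and that, on the support of the measure, the "bad" directions (where $1+s_i$ is small) are weighted by the density $(1-s_i^2)^{\alpha_i+\gamma_i+\kappa_i-1\slash 2}$, so one can trade a small factor $1+s_i$ against an integrable singularity. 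Since all of this is carried out in dimension one in \cite[Section 5]{NS3} and the present weight, balls, and quadratic forms all tensorize, the $d$-dimensional statement follows by the same reasoning applied coordinate by coordinate; I would therefore present the one-dimensional estimates as the core computation and indicate the tensorization briefly.
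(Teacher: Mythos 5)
Your overall strategy coincides with the paper's: both reduce the lemma to the machinery of \cite[Section 5]{NS3} (the ball-volume formula $w^+_{\alpha}(B^+(x,r)) \simeq \prod_i r\,(x_i+r)^{2\alpha_i+1}$, the lower bound $q_+ \ge \|x-y\|^2$, and the case analysis of \cite[Proposition 5.9]{NS3} built on \cite[Lemma 5.8]{NS3}, which exploits the density $(1-s_i^2)^{\nu-1/2}$ near $s_i=-1$). The paper is equally brief about this part, so deferring to \cite{NS3} for the $\kappa=(0,\ldots,0)$ case is acceptable. Be aware, though, that your intermediate display $(q_+)^{-d-|\alpha|-|\gamma|} \lesssim \prod_i((x_i+y_i)\|y-x\|)^{-(\alpha_i+\gamma_i)}\cdots$ is false pointwise (take $s=(-1,\ldots,-1)$ and $x_i+y_i\gg\|x-y\|$, where $q_+=\|x-y\|^2$); the correct mechanism is that the factors $(x_iy_i)^{-\alpha_i-\gamma_i-1/2}$ are produced only after integration in $s_i$ against the density, not before. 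Your "integrate against a finite measure and use $q_+\ge\|x-y\|^2$" shortcut would only yield $\|x-y\|^{-2d-2|\alpha|-2|\gamma|}$, which is strictly weaker than $1/w^+_{\alpha}(B^+(x,\|y-x\|))$ when some $x_i\gg\|x-y\|$.

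The genuine gap is in your treatment of the shift $\kappa$, which is precisely the part the paper writes out in full because it is the only new ingredient beyond \cite{NS3}. Your justification is off on two counts. First, the density comparison $\Pi_{\nu+u}(s)\lesssim\Pi_{\nu}(s)$ for $u\ge 0$ holds on \emph{all} of $(-1,1)$ (since $(1-s^2)^u\le 1$), not merely on compact subsets; if it held only on compact subsets it would be useless, as the whole point of \cite[Lemma 5.8]{NS3} is the contribution near $s=-1$. Second, the delicate case is not $\alpha_i+\gamma_i+\kappa_i=-1/2$ (there $\kappa_i=0$ and nothing needs to be done), but rather $\alpha_i+\gamma_i=-1/2$ with $\kappa_i>0$: then $\Pi_{\alpha_i+\gamma_i}$ is purely atomic while $\Pi_{\alpha_i+\gamma_i+\kappa_i}$ is absolutely continuous, so no density comparison is available. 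The paper resolves this by observing that $s_i\mapsto(q_+)^{-v}$ is decreasing on $[-1,1]$, hence its integral against the finite measure $\Pi_{\alpha_i+\gamma_i+\kappa_i}$ is dominated by its value at $s_i=-1$, which is picked up by the atom of $\Pi_{-1/2}$ at $-1$; this yields the clean domination $\int\Pi_{\alpha+\gamma+\kappa}(ds)\,(q_+)^{-v}\lesssim\int\Pi_{\alpha+\gamma}(ds)\,(q_+)^{-v}$ and reduces everything to $\kappa=0$. Alternatively one can, as you suggest, rerun \cite[Lemma 5.8]{NS3} directly with the mismatched exponents (density index $\alpha_i+\gamma_i+\kappa_i$ against the power $-d-|\alpha|-|\gamma|$ of $q_+$), and this does close, but it requires redoing the splitting of the $s_i$-integral at $1+s_i\simeq(x_i-y_i)^2/(x_iy_i)$ with the new exponent rather than being "identical"; as written, your argument does not establish it.
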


\begin{proof}
When $\kappa = (0,\ldots,0)$, the proof is a direct modification of the arguments used in 
\cite{NS3} for showing Proposition 5.9 there. The difference is that now 
\cite[Lemma 5.8]{NS3} must be applied with different parameters, but otherwise the reasoning is
essentially the same. Thus we omit the details.

The general case of arbitrary $\kappa$ follows easily from the following observation:
for any fixed $u,v > 0$ and $j=1,\ldots,d$, 
$$
\int_{[-1,1]^d} \Pi_{\alpha+\gamma+u e_j}(ds) \, (q_+(x,y,s))^{-v} \lesssim
\int_{[-1,1]^d} \Pi_{\alpha+\gamma}(ds) \, (q_+(x,y,s))^{-v}.
$$
Thus it remains to justify the above estimate.

When $\alpha_j+\gamma_j>-1\slash 2$ the relation between the (one-dimensional)
densities,
$$
\Pi_{\alpha_j+\gamma_j+u}(s_j) \lesssim \Pi_{\alpha_j+\gamma_j}(s_j), \qquad s_j \in (-1,1),
$$
deduced immediately from the exact formula, does the job. 
In the case $\alpha_j+\gamma_j=-1\slash 2$ it is enough to notice that $(q_+)^{-v}$, as
a one-dimensional function of $s_j \in [-1,1]$, is decreasing; in particular, it attains its
maximum at the endpoint $s_j=-1$.
Then the conclusion follows by the special form of $\Pi_{-1\slash 2}$ and the finiteness of
the measure $\Pi_{\alpha_j+\gamma_j+u}$.
\end{proof}

Finally, notice that for $\lambda \in [-1\slash 2,\infty)^d$, $j \in \{1,\ldots,d\}$ and $u\ge 0$,
\begin{equation} \label{mon_beta}
\beta_{d,\lambda+u e_j}(\zeta) \le \zeta^{-u} \beta_{d,\lambda}(\zeta), 
	\qquad \zeta \in (0,1).
\end{equation}
This property follows directly from the exact formula for $\beta_{d,\alpha}$.

\begin{proof}[Proof of Lemma \ref{ker_est_+}: the growth estimate]
By \eqref{k_r} and \eqref{d_psi}
\begin{align*}
|R_j^{\alpha,\varepsilon}(x,y)|  \lesssim  &\; \int_{[-1,1]^d} \Pi_{\alpha+\varepsilon}(ds) \int_0^1
	\beta_{d,\alpha+\varepsilon}(\zeta) (xy)^{\varepsilon} \Big[x_j + \frac{1}{\zeta}|x_j+y_j s_j|
		+ \zeta |x_j-y_j s_j| \Big] \\
		& \quad \cdot \exp \Big( -\frac{1}{4\zeta} q_+ - \frac{\zeta}{4} q_{-} \Big) d\zeta \\
 & +  \; \chi_{\{\varepsilon_j=1\}} \int_{[-1,1]^d} \Pi_{\alpha+\varepsilon}(ds) \int_0^1
	\beta_{d,\alpha+\varepsilon}(\zeta) y_j(xy)^{\varepsilon-e_j} 
		\exp \Big( -\frac{1}{4\zeta} q_+ - \frac{\zeta}{4} q_{-} \Big) d\zeta \\
\equiv &\;  \mathcal{I}_1 + \mathcal{I}_2.
\end{align*}
We treat the integrals $\mathcal{I}_1$ and $\mathcal{I}_2$ separately.
Applying Lemma \ref{m_lem} (a), (b) and (c) we get
$$
\mathcal{I}_1 \lesssim (xy)^{\varepsilon} \int_{[-1,1]^d} \Pi_{\alpha+\varepsilon}(ds) \int_0^1
	\beta_{d,\alpha+\varepsilon}(\zeta) \zeta^{-1\slash 2} 
		\exp \Big( -\frac{1}{8\zeta} q_+ - \frac{\zeta}{8} q_{-} \Big) d\zeta.
$$
Next, using Lemma \ref{m_lem} (d) and observing that 
$(xy)^{\varepsilon} \le (x+y)^{2\varepsilon}$ gives
$$
\mathcal{I}_1 \lesssim (x+y)^{2\varepsilon} \int_{[-1,1]^d} \Pi_{\alpha+\varepsilon}(ds)
	(q_+)^{-d-|\alpha|-|\varepsilon|}.
$$
Now Lemma \ref{lemhom}, taken with $\gamma = \varepsilon$ and $\kappa = (0,\ldots,0)$, 
provides the required growth bound for $\mathcal{I}_1$.

To estimate $\mathcal{I}_2$ we assume that $\varepsilon_j=1$ (otherwise $\mathcal{I}_2$ vanishes) and
use \eqref{mon_beta}, specified to
$\lambda = \alpha + \varepsilon - e_j\slash 2$ and $u=1\slash 2$, 
getting
$$
\mathcal{I}_2 \lesssim  \int_{[-1,1]^d} 
	\Pi_{\alpha+\varepsilon}(ds) \int_0^1
	\beta_{d,\alpha+\varepsilon-e_j \slash 2}(\zeta) (x+y)^{2\varepsilon-e_j} \zeta^{-1\slash 2}
		\exp \Big( -\frac{1}{4\zeta} q_+ - \frac{\zeta}{4} q_{-} \Big) d\zeta.
$$
Then Lemma \ref{m_lem} (d) shows that
$$
\mathcal{I}_2 \lesssim (x+y)^{2(\varepsilon-e_j\slash 2)} 
	\int_{[-1,1]^d} \Pi_{\alpha+\varepsilon}(ds) 
	(q_+)^{-d-|\alpha|-|\varepsilon-e_j\slash 2|}.
$$
Combining the above with Lemma \ref{lemhom}, applied with $\gamma = \varepsilon-e_j \slash 2$ and
$\kappa = e_j \slash 2$,
produces the relevant bound for $\mathcal{I}_2$. 
This finishes proving the growth estimate for $R^{\alpha,\varepsilon}_j(x,y)$. 
\end{proof}

\begin{proof}[Proof of Lemma \ref{ker_est_+}: the smoothness estimate]
By symmetry reasons, without any loss of generality we may focus on the case $j=1$.
Thus the kernel under consideration is
$$
R_1^{\alpha,\varepsilon}(x,y) = \int_{[-1,1]^d} \Pi_{\alpha + \varepsilon}(ds) \int_0^1
	\beta_{d,\alpha+\varepsilon}(\zeta) \, \delta_1 \psi_{\zeta}^{\varepsilon}(x,y,s) \, d\zeta,
$$
with $\delta_1 \psi_{\zeta}^{\varepsilon}(x,y,s)$ given by \eqref{d_psi}.
Our task is to estimate $\|\nabla_{\! x,y}R_1^{\alpha,\varepsilon}(x,y)\|$. However, since
\begin{align} \nonumber
\|\nabla_{\! x,y}R_1^{\alpha,\varepsilon}(x,y)\| & =  \bigg\|
	\int_{[-1,1]^d} \Pi_{\alpha + \varepsilon}(ds) \int_0^1
	\beta_{d,\alpha+\varepsilon}(\zeta) \, \nabla_{\! x,y}\big[\delta_1 
		\psi_{\zeta}^{\varepsilon}(x,y,s)\big] \, d\zeta \bigg\| \\
& \lesssim \int_{[-1,1]^d} \Pi_{\alpha + \varepsilon}(ds) \int_0^1
	\beta_{d,\alpha+\varepsilon}(\zeta) \, \big\| \nabla_{\!x,y} \big[
		\delta_1 \psi_{\zeta}^{\varepsilon}(x,y,s)\big] \big\| \, d\zeta \label{i_est}
\end{align}
(passing with $\nabla_{\! x,y}$ under the integral signs is legitimate, the
justification being implicitly contained in the estimates that follow; 
see comments at the end of this proof),
and since only the first variables $x_1,y_1$ are distinguished, it is sufficient to show
a suitable bound for the double integral in \eqref{i_est}, but with the gradient restricted only
to the first two coordinates in $x$ and $y$.

To proceed, we need to compute the relevant derivatives. Denoting
$\Psi_{\zeta}^{\alpha,\varepsilon} = \delta_1 \psi_{\zeta}^{\varepsilon}(x,y,s)$
and abbreviating the exponential entering $\psi_{\zeta}^{\varepsilon}(x,y,s)$ to 
$\e(\zeta,q_{\pm})$ we get
\begin{align*}
\frac{\partial}{\partial x_1} \Psi_{\zeta}^{\alpha,\varepsilon} = &
	\bigg[ \chi_{\{\varepsilon_1=1\}} y_1 (xy)^{\varepsilon-e_1} 
	\Big( x_1 - \frac{1}{2\zeta}(x_1+y_1 s_1)
		- \frac{\zeta}{2} (x_1-y_1 s_1)\Big) \\ 
	& + (xy)^{\varepsilon} \Big( 1 - \frac{1}{2\zeta} - \frac{\zeta}{2} \Big) \bigg] \e(\zeta,q_{\pm})
	- \Big( \frac{1}{2\zeta}(x_1+y_1 s_1)+ \frac{\zeta}{2}(x_1-y_1 s_1) \Big)
		 \Psi_{\zeta}^{\alpha,\varepsilon},\\
\frac{\partial}{\partial y_1} \Psi_{\zeta}^{\alpha,\varepsilon} = &
	\bigg[ \chi_{\{\varepsilon_1=1\}} x_1 (xy)^{\varepsilon-e_1} 
	\Big( x_1 - \frac{1}{2\zeta}(x_1+y_1 s_1)
		- \frac{\zeta}{2} (x_1-y_1 s_1)\Big)  
	 - (xy)^{\varepsilon} \Big( \frac{s_1}{2\zeta} - \frac{\zeta s_1}{2} \Big) \\
	& + 2(\alpha_1+1) \chi_{\{\varepsilon_1=1\}} (xy)^{\varepsilon-e_1} \bigg] \e(\zeta,q_{\pm})
	- \Big( \frac{1}{2\zeta}(y_1+x_1 s_1)+ \frac{\zeta}{2}(y_1-x_1 s_1) \Big)
		 \Psi_{\zeta}^{\alpha,\varepsilon},\\
\frac{\partial}{\partial x_2} \Psi_{\zeta}^{\alpha,\varepsilon} = &
	\bigg[ \chi_{\{\varepsilon_2=1\}} y_2 (xy)^{\varepsilon-e_2} 
	\Big( x_1 - \frac{1}{2\zeta}(x_1+y_1 s_1)
		- \frac{\zeta}{2} (x_1-y_1 s_1)\Big)  
	  + 2(\alpha_1+1)\\ & \cdot \chi_{\{\varepsilon_1=\varepsilon_2=1\}} y_1 y_2
	   (xy)^{\varepsilon-e_1-e_2} \bigg] \e(\zeta,q_{\pm})
	- \Big( \frac{1}{2\zeta}(x_2+y_2 s_2)+ \frac{\zeta}{2}(x_2-y_2 s_2) \Big)
		 \Psi_{\zeta}^{\alpha,\varepsilon},\\
\frac{\partial}{\partial y_2} \Psi_{\zeta}^{\alpha,\varepsilon} = &
	\bigg[ \chi_{\{\varepsilon_2=1\}} x_2 (xy)^{\varepsilon-e_2} 
	\Big( x_1 - \frac{1}{2\zeta}(x_1+y_1 s_1)
		- \frac{\zeta}{2} (x_1-y_1 s_1)\Big)  
	  + 2(\alpha_1+1) \\& \cdot \chi_{\{\varepsilon_1=\varepsilon_2=1\}} y_1 x_2
	   (xy)^{\varepsilon-e_1-e_2} \bigg] \e(\zeta,q_{\pm})
	- \Big( \frac{1}{2\zeta}(y_2+x_2 s_2)+ \frac{\zeta}{2}(y_2-x_2 s_2) \Big)
		 \Psi_{\zeta}^{\alpha,\varepsilon}.		 
\end{align*}
We will estimate separately the four integrals resulting from replacing $\nabla_{\! x,y}$ in
\eqref{i_est} by one of the above derivatives. 
Denote these integrals by
$\mathcal{J}_{x_1}$, $\mathcal{J}_{y_1}$, $\mathcal{J}_{x_2}$ and $\mathcal{J}_{y_2}$, 
respectively, and assume that $\varepsilon_1=\varepsilon_2=1$ (otherwise the situation
is even easier since some terms do not appear).

Applying (a), (b) and (c) of Lemma \ref{m_lem} we get
\begin{align*}
\mathcal{J}_{x_1} & \lesssim 
	(x+y)^{2\varepsilon-e_1}
\int_{[-1,1]^d} \Pi_{\alpha + \varepsilon}(ds) \int_0^1
	\beta_{d,\alpha+\varepsilon}(\zeta) \zeta^{-1\slash 2} 
		\sqrt{\e(\zeta,q_{\pm})}\,d\zeta \\
	& \quad + (x+y)^{2\varepsilon} \int_{[-1,1]^d} 
	\Pi_{\alpha + \varepsilon}(ds) \int_0^1 \beta_{d,\alpha+\varepsilon}(\zeta) \zeta^{-1} 
		\e(\zeta,q_{\pm}) \,d\zeta \\
	& \quad + \int_{[-1,1]^d} \Pi_{\alpha + \varepsilon}(ds) \int_0^1
	\beta_{d,\alpha+\varepsilon}(\zeta) \zeta^{-1\slash 2} 
		|\delta_1 \psi_{\zeta}^{\varepsilon}(x,y,s)| \frac{1}{\sqrt{\e(\zeta,q_{\pm})}} 
		\, d\zeta \\
& \equiv \mathcal{J}^1_{x_1} + \mathcal{J}^2_{x_1} + \mathcal{J}^3_{x_1}.
\end{align*}
To estimate $\mathcal{J}_{x_1}^1$ we use \eqref{mon_beta} and then 
Lemma \ref{m_lem} (d), obtaining
\begin{align*}
\mathcal{J}_{x_1}^1 & \lesssim 
	(x+y)^{2\varepsilon-e_1}
\int_{[-1,1]^d} \Pi_{\alpha + \varepsilon}(ds) \int_0^1
	\beta_{d,\alpha+\varepsilon-e_1\slash 2}(\zeta) \zeta^{-1} 
		\sqrt{\e(\zeta,q_{\pm})} \,   
		d\zeta \\
& \lesssim 
	(x+y)^{2(\varepsilon-e_1\slash 2)}
	\int_{[-1,1]^d} \Pi_{\alpha + \varepsilon}(ds) 
		(q_+)^{-d-|\alpha|-|\varepsilon-e_1\slash 2|-1\slash 2}.
\end{align*}
The expression $\mathcal{J}_{x_1}^2$ can be treated in the same way as $\mathcal{I}_1$
appearing in the proof of the growth estimate above. We get
$$
\mathcal{J}_{x_1}^2 \lesssim (x+y)^{2\varepsilon}
	\int_{[-1,1]^d} \Pi_{\alpha + \varepsilon}(ds) 
		(q_+)^{-d-|\alpha|-|\varepsilon|-1\slash 2}.
$$
Finally, estimating $\mathcal{J}_{x_1}^3$ is completely analogous to estimating the growth of 
$|R_j^{\alpha,\varepsilon}(x,y)|$ performed earlier. The result is
\begin{align*}
\mathcal{J}_{x_1}^3 & \lesssim (x+y)^{2\varepsilon}
	\int_{[-1,1]^d} \Pi_{\alpha + \varepsilon}(ds) 
		(q_+)^{-d-|\alpha|-|\varepsilon|-1\slash 2} \\
	& \quad + 
	(x+y)^{2(\varepsilon-e_1\slash 2)}
	\int_{[-1,1]^d} \Pi_{\alpha + \varepsilon}(ds) 
		(q_+)^{-d-|\alpha|-|\varepsilon-e_1\slash 2|-1\slash 2}.
\end{align*}
Combining the above estimates of $\mathcal{J}^1_{x_1}$, $\mathcal{J}^2_{x_1}$ and
$\mathcal{J}^3_{x_1}$ with Lemma \ref{lemhom}, specified to either $\gamma = \varepsilon$
and $\kappa = (0,\ldots,0)$ or $\gamma = \varepsilon - e_1 \slash 2$ and $\kappa = e_1\slash 2$, 
provides the required smoothness bound for 
$\mathcal{J}_{x_1}$.

Considering $\mathcal{J}_{y_1}$, items (a), (b) and (c) of Lemma \ref{m_lem} lead to
$$
\mathcal{J}_{y_1} \lesssim \mathcal{J}^1_{x_1} + \mathcal{J}^2_{x_1} + \mathcal{J}^3_{x_1}
	+ 
		(x+y)^{2\varepsilon-2e_1} \int_{[-1,1]^d} 
	\Pi_{\alpha + \varepsilon}(ds) \int_0^1 \beta_{d,\alpha+\varepsilon}(\zeta)
		\e(\zeta,q_{\pm})  
		\, d\zeta.
$$
Thus it suffices to bound suitably the last term of the sum, which we denote by $\mathcal{J}_{y_1}^1$.
Making use of \eqref{mon_beta} and then applying item (d) of Lemma \ref{m_lem} gives
\begin{align*}
\mathcal{J}_{y_1}^1 & \lesssim 
		(x+y)^{2(\varepsilon-e_1)} \int_{[-1,1]^d} 
	\Pi_{\alpha + \varepsilon}(ds) \int_0^1 \beta_{d,\alpha+\varepsilon-e_1}(\zeta) \zeta^{-1}
		\e(\zeta,q_{\pm}) 
		\, d\zeta \\
& \lesssim  
		(x+y)^{2(\varepsilon-e_1)} \int_{[-1,1]^d} 
	\Pi_{\alpha + \varepsilon}(ds) (q_+)^{-d-|\alpha|-|\varepsilon-e_1|-1\slash 2}.		
\end{align*}
Now Lemma \ref{lemhom} employed with $\gamma = \varepsilon -e_1$ and $\kappa = e_1$
implies the desired bound of $\mathcal{J}^1_{y_1}$.

Passing to $\mathcal{J}_{x_2}$, items (a), (b) and (c) of Lemma \ref{m_lem} reveal that
\begin{align*}
\mathcal{J}_{x_2} & \lesssim 
	(x+y)^{2\varepsilon-e_2}
\int_{[-1,1]^d} \Pi_{\alpha + \varepsilon}(ds) \int_0^1
	\beta_{d,\alpha+\varepsilon}(\zeta) \zeta^{-1\slash 2} 
		\sqrt{\e(\zeta,q_{\pm})}\,d\zeta \\
	& \quad + 
	(x+y)^{2\varepsilon-e_1-e_2} \int_{[-1,1]^d} 
	\Pi_{\alpha + \varepsilon}(ds) \int_0^1 \beta_{d,\alpha+\varepsilon}(\zeta) 
		\e(\zeta,q_{\pm}) \,d\zeta \\
	& \quad + \int_{[-1,1]^d} \Pi_{\alpha + \varepsilon}(ds) \int_0^1
	\beta_{d,\alpha+\varepsilon}(\zeta) \zeta^{-1\slash 2} 
		|\delta_1 \psi_{\zeta}^{\varepsilon}(x,y,s)| \frac{1}{\sqrt{\e(\zeta,q_{\pm})}}\, d\zeta \\
& \equiv \mathcal{J}^1_{x_2} + \mathcal{J}^2_{x_2} + \mathcal{J}^3_{x_2}.
\end{align*}
Treatment of $\mathcal{J}^1_{x_2}$ and $\mathcal{J}^3_{x_2}$ is the same as in case of
$\mathcal{J}^1_{x_1}$ and $\mathcal{J}^3_{x_1}$. Thus it remains to deal with $\mathcal{J}^2_{x_2}$.
Let $\gamma = \varepsilon -e_1\slash 2 -e_2\slash 2$.
In view of \eqref{mon_beta} and Lemma \ref{m_lem} (d) we have
\begin{align*}
\mathcal{J}_{x_2}^2 & \lesssim 
	(x+y)^{2\gamma}
	 \int_{[-1,1]^d} \Pi_{\alpha + \varepsilon}(ds) 
	  \int_0^1 \beta_{d,\alpha+\gamma}(\zeta) \zeta^{-1}
		\e(\zeta,q_{\pm})   
		\, d\zeta \\	
& \lesssim 
	(x+y)^{2\gamma}
	 \int_{[-1,1]^d} \Pi_{\alpha + \varepsilon}(ds) 
	(q_+)^{-d-|\alpha|-|\gamma|-1\slash 2}.
\end{align*}
This, together with Lemma \ref{lemhom} 
taken with $\kappa = e_1\slash 2 + e_2\slash 2$
provides the relevant estimate of $\mathcal{J}_{x_2}$.

Eventually, the analysis related to $\mathcal{J}_{y_2}$ is analogous to that for $\mathcal{J}_{x_2}$.

We now come back to explaining the possibility of exchanging $\partial_{x_1}$
with the integral over $[-1,1]^d \times (0,1)$ (a similar argument works for
the remaining derivatives, and
considering only $j=1$ does not affect the generality). To apply
Fubini's theorem and then use the argument invoked earlier in connection with \eqref{k_r},
it is sufficient to show that
$$
\int_u^v \bigg(\int_{[-1,1]^d} \Pi_{\alpha+\varepsilon}(ds) \int_0^1  \big| \partial_{x_1}
    \big[\delta_1 \psi^{\varepsilon}_{\zeta}(x,y,s) \big]\big| 
    	\beta_{d,\alpha+\varepsilon}(\zeta)\, d\zeta \bigg) dx_1
         < \infty,
$$
for any $0<u<v<\infty$ such that $y_1$ is not in $[u,v]$ when $x_i=y_i$, $2\le i \le d$.
This estimate, however, may be easily obtained with the aid of the bound for $\mathcal{J}_{x_1}$ 
established above.

The proof of the smoothness estimate in Lemma \ref{ker_est_+} is complete.
\end{proof}

\end{document}